\theoremstyle{plain}
\newtheorem{theorem}{Theorem}[section]
\newtheorem{proposition}[theorem]{Proposition}
\newtheorem{lemma}[theorem]{Lemma}
\newtheorem{problem}{Problem\!\!}
\newtheorem{remark}{\sc Remark\!\!}
\newtheorem{example}{\sc Example\!\!}
\newtheorem{notation}{\sc Notations\!\!}
\renewcommand{\baselinestretch}{1.15}
\newenvironment{reference}[1]{%

\begin{flushleft}\normalsize{\textbf{References}}\end{flushleft}%
\begin{enumerate}\setlength{\itemsep}{-5pt}\small
}{\end{enumerate}}
\renewcommand{\section}[1]{%
\addtocounter{section}{1}\setcounter{subsection}{0}%
\setcounter{theorem}{0}%
\begin{flushleft}
\normalsize{\textbf{\arabic{section}. #1}}
\end{flushleft}}
\renewcommand{\subsection}[1]{\addtocounter{subsection}{1}%
\noindent\textsc{\arabic{section}.\arabic{subsection}. #1 }}
\title{\vspace*{-21mm}
\large{\textbf{
On the Iwasawa invariants of a link in the 3-sphere
}}
\footnotetext{2000 Mathematics Subject Classification: 57M25, 11R23.}
\footnotetext{Key words: Iwasawa invariant, Alexander polynomial, link module.}
}
\author{ 
\textsc{\normalsize Teruhisa Kadokami}%
\and 
\textsc{\normalsize Yasushi Mizusawa}%
}
\date{}
\begin{document}
{\renewcommand{\baselinestretch}{1.05} \maketitle}

\vspace*{-14mm}
\renewcommand{\abstractname}{}
{\renewcommand{\baselinestretch}{1.05}
\begin{abstract}{\small 
\noindent\textsc{Abstract.} 
Based on the analogy between knots and primes, J. Hillman, D. Matei and M. Morishita defined the Iwasawa invariants for sequences of cyclic covers of links with an analogue of Iwasawa's class number formula of number fields. In this paper, we consider the existence of covers of links with prescribed Iwasawa invariants, discussing analogies in number theory. We also propose and consider a problem analogous to Greenberg's conjecture. 
}\end{abstract}}

\vspace*{1pt}

\section{Introduction}

Let $L=K_1 \cup K_2 \cup \cdots \cup K_r$ be an $r$-component link in the $3$-sphere $S^3$. Let $\ell_{ij}=\mathrm{lk}(K_i,K_j)$ be the linking number of the components $K_i$ and $K_j$ $(i\neq j)$. Let $X$ be the complement of an open tubular neighborhood of $L$, and $G_L=\pi_1(X)$ the link group of $L$. Let $\pi : \widetilde{X} \rightarrow X$ be the maximal abelian cover of $X$. Let $[m_i] \in G_L$ be the class of the meridian $m_i$ of the component $K_i$, where we regard $m_i$ as a loop from the base point $\ast$ of $X$. Then $H_1(X;\mathbb Z) \simeq \mathrm{Aut}(\widetilde{X}/X) \simeq G_L/G_L^{\prime}$ is generated by $t_i=[m_i]G_L^{\prime}$ $(i=1,2,\cdots,r)$ as a free $\mathbb Z$-module of rank $r$, where $G_L^{\prime}$ is the commutator subgroup of $G_L$. Put the Laurent polynomial ring $\varLambda_r=\mathbb Z[t_1^{\pm 1},t_2^{\pm 1},\cdots,t_r^{\pm 1}] \simeq \mathbb Z[G_L/G_L^{\prime}]$. Then the Alexander module $A_L=H_1(\widetilde{X},\pi^{-1}(\ast);\mathbb Z)$ and the link module $B_L=H_1(\widetilde{X};\mathbb Z) \simeq G_L^{\prime}/G_L^{\prime\prime}$ are finitely generated $\varLambda_r$-module, where $G_L^{\prime\prime}$ is the commutator subgroup of $G_L^{\prime}$. 

Based on the analogies between link groups and Galois groups of number fields with restricted ramification (cf.\ \cite{Mor02} \cite{Mor10} \cite{Mor} etc.), we regard $\widetilde{X}$ as an analogue of the maximal free abelian pro-$p$-extension $\widetilde{k}$ of a number field $k$, where $p$ is a fixed prime number. Then the Iwasawa theory for $\widetilde{k}/k$ is set up in parallel with Alexander-Fox theory (cf.\ e.g.\ \cite{Gre78} \cite{NQD83}). Let $S=\{\wp_1,\wp_2,\cdots,\wp_r\}$ be the set of all primes of $k$ lying over $p$, and $G_S=G_S(k)$ the Galois group of the maximal pro-$p$-extension of $k$ unramified outside $S$. Then $H_1(G_S;\mathbb Z_p) \simeq G_S/G_S^{\prime} \simeq \mathrm{Gal}(\widetilde{k}/k) \simeq \mathbb Z_p^{r_2+1+\delta}$ for a subgroup $G_S^{\prime}$ and $\delta \ge 0$, where $\mathbb Z_p$ is (the additive group of) the $p$-adic integer ring, and $r_2$ is the number of infinite complex places of $k$. Leopoldt's conjecture predicts that $\delta=0$ for any $k$ and $p$ (cf.\ \cite{Was} Theorem 13.4). For the simplicity, we assume that $\delta=0$ throughout this paper. If $k$ has the class number $h_k \not\equiv 0 \pmod{p}$, $G_S/G_S^{\prime}$ is generated by $\tau_iG_S^{\prime}$ $(i=1,2,\cdots,r_2+1)$ where each $\tau_i$ is an element of inertia group of some $\wp_j$. The $S$-ramified Iwasawa module $\mathfrak{X}_S=G_S^{\prime}/G_S^{\prime\prime}$ is a finitely generated $\mathbb Z_p[[G_S/G_S^{\prime}]]$-module, where $G_S^{\prime\prime}$ is the commutator subgroup of $G_S^{\prime}$. 
\[
\begin{array}{ccc}
\hbox{link }L=K_1 \cup K_2 \cup \cdots \cup K_r &\longleftrightarrow& \hbox{primes }S=\{\wp_1,\wp_2,\cdots,\wp_r\} \\
\hbox{link group }G_L &\longleftrightarrow& \hbox{Galois group }G_S \\
G_L/G_L^{\prime} \simeq \mathrm{Aut}(\widetilde{X}/X) \simeq \mathbb Z^r &\longleftrightarrow& G_S/G_S^{\prime} \simeq \mathrm{Gal}(\widetilde{k}/k) \simeq \mathbb Z_p^{r_2+1} \\
\hbox{meridians }[m_i] &\longleftrightarrow& \hbox{inertia groups }\langle\,\cdots,\tau_i,\cdots\rangle \\
\hbox{link module }B_L \simeq G_L^{\prime}/G_L^{\prime\prime} &\longleftrightarrow& \hbox{Iwasawa module }\mathfrak{X}_S=G_S^{\prime}/G_S^{\prime\prime} \\
\end{array}
\]

For a surjective homomorphism $\sigma_{\mathbf{z}} : H_1(X;\mathbb Z) \rightarrow \mathbb Z$ (resp.\ $G_S/G_S^{\prime} \rightarrow \mathbb Z_p$), we have an infinite cyclic cover $X_{\mathbf{z}} \rightarrow X$ (resp.\ a $\mathbb Z_p$-extension $k_{\infty}/k$) corresponding to the kernel. Iwasawa \cite{Iwa59} gave a formula for the growth of the $p$-parts of the class numbers in the $\mathbb Z_p$-extension $k_{\infty}/k$ (cf.\ \S2). Hillman, Matei and Morishita \cite{HMM} (and \cite{KM08}) gave an analogous formula for the growth of the $p$-parts of the order of $1$-homology groups in the tower of $p^n$-fold cyclic branched covering spaces of $L$ along $X_{\mathbf{z}}$ (cf.\ \S2 Theorem \ref{Iwasawa type formula} (2)). These formulas are described by the Iwasawa invariants $\lambda$, $\mu$ and $\nu$, which are closely related to the structure of link modules or Iwasawa modules. 

In this paper, we show some basic properties of Iwasawa invariants of $L$, discussing analogies with Iwasawa theory of number fields. In \S2 and \S3, we decide the parity of $\lambda$-invariants of $L$ and the existence of $L$ with prescribed $\lambda$- and $\mu$-invariants. We also give some examples of $2$-component links $L$ with explicit Iwasawa invariants. In \S4 and \S5, we propose and consider an analogous problem to Greenberg's conjecture which asserts that the maximal unramified quotient of $\mathfrak{X}_S$ is {\em pseudonull}. 

\begin{notation}{\em 
For a noetherian unique factorization domain $\varLambda$ and a finitely generated $\varLambda$-module $A$, we denote by $\mathrm{rank}_{\varLambda}A$ the free rank of $A$. The {\em divisorial hull} of an ideal $E$ is the intersection of all principal ideals of $\varLambda$ containing $E$ (cf.\ \cite{Hil} p.49). Let $E_{\varLambda}^{(i)}(A)$ be the $i$th elementary ideal of $A$, and $\varDelta_{\varLambda}^{(i)}(A)$ a generator of the divisorial hull of $E_{\varLambda}^{(i)}(A)$ as a principal ideal. A finitely generated torsion $\varLambda$-module $A$ is {\em pseudonull} if and only if the divisorial hull of the annihilator ideal $\mathrm{Ann}_{\varLambda}A$ is $\varLambda$ (cf.\ \cite{Hil} Theorem 3.5), i.e., $\mathrm{Ann}_{\varLambda}A$ has at least two relatively prime elements. We denote by $|A|$ the order of a $\mathbb Z$-module $A$. Note that $|A|=0$ if $A$ is infinite. Let $v_p$ be the additive $p$-adic valuation normalized as $v_p(p)=1$, and $\varPhi_{p^n}(t)=\sum_{i=0}^{p-1}(t^{p^{n-1}})^i=(t^{p^n}-1)/(t^{p^{n-1}}-1)$ the $p^n$th cyclotomic polynomial for $n \ge 1$. 
}\end{notation}

\section{Iwasawa invariants and Alexander polynomials}

For $f(t) \in \varLambda_1=\mathbb Z[t^{\pm 1}]$, we denote by $\deg^{\prime} f(t)$ the reduced degree as a Laurent polynomial. The Alexander polynomial of $L$ is defined as 
\[
\varDelta_L=\varDelta_L(t_1,t_2,\cdots,t_r)=\varDelta_{\varLambda_r}^{(1)}(A_L)=\varDelta_{\varLambda_r}^{(0)}(B_L)
\]
up to multiplication by units in $\varLambda_r$ (cf.\ \cite{Kaw96} Proposition 7.3.4 (2)). For $\mathbf{z}=(z_1,z_2,\cdots,z_r) \in \mathbb Z^r$ such that $\gcd(z_1,z_2,\cdots,z_r)=1$ and $\prod_{i=1}^{r} z_i \neq 0$, we define a surjective homomorphism 
\[
\sigma_{\mathbf{z}} : H_1(X;\mathbb Z) \rightarrow \mathbb Z : t_i \mapsto z_i . 
\]
Then $\mathrm{Ker}\,\sigma_{\mathbf{z}}$ corresponds to an infinite cyclic cover $X_{\mathbf{z}} \rightarrow X$, and the reduced Alexander polynomial associated to $X_{\mathbf{z}}$ is defined in $\varLambda_1 \simeq \mathbb Z[\mathrm{Aut}(X_{\mathbf{z}}/X)]$ as 
\[
\varDelta_{L,\mathbf{z}}(t) = \varDelta_{\varLambda_1}^{(0)}(H_1(X_{\mathbf{z}};\mathbb Z)) = (t-1)\varDelta_L(t^{z_1},t^{z_2},\cdots,t^{z_r}) 
\]
if $r \ge 2$, and $\varDelta_{L,\mathbf{z}}(t)=\varDelta_L(t)$ if $r=1$ (cf.\ \cite{Kaw96} Proposition 7.3.10 (1)). For $\mathbf{1}=(1,1,\cdots,1)$, $X_{\mathbf{1}}$ is called the total linking number covering space of $X$. 
Let $M_{\mathbf{z},p^n} \rightarrow S^3$ be the Fox completion (cf.\ e.g.\ \cite{Mor}) of the $p^n$-fold subcover $X_{\mathbf{z},p^n} \rightarrow X$ of $X_{\mathbf{z}} \rightarrow X$. Then $M_{\mathbf{z},p^n} \rightarrow S^3$ is a cyclic branched cover of $L$ associated to $X_{\mathbf{z},p^n} \rightarrow X$. 

For a fixed prime number $p$, we put $\widehat{\varLambda}_1 = \mathbb Z_p[[T]]$ the ring of formal power series. We can regard $\varLambda_1$ as a subring of $\widehat{\varLambda}_1$ by identifying $t=1+T$. If $\varDelta_{L,\mathbf{z}}(t) \neq 0$, $\varDelta_{L,\mathbf{z}}(1+T) \in \widehat{\varLambda}_1$ is uniquely written as 
\[
\varDelta_{L,\mathbf{z}}(1+T)=p^{\mu_{L,\mathbf{z}}} P_{L,\mathbf{z}}(T) U(T)
\]
by the $p$-adic Weierstrass preparation theorem (\cite{Was} Theorem 7.3), where $0 \le \mu_{L,\mathbf{z}} \in \mathbb Z$, $U(T) \in \widehat{\varLambda}_1^{\times}$ and $P_{L,\mathbf{z}}(T)$ is a distinguished polynomial of degree $\lambda_{L,\mathbf{z}}=\deg P_{L,\mathbf{z}}(T) \le \deg^{\prime} \varDelta_{L,\mathbf{z}}(t)$, i.e., a monic polynomial such that $P_{L,\mathbf{z}}(T) \equiv T^{\lambda_{L,\mathbf{z}}} \pmod{p}$. Further, if $\varDelta_{L,\mathbf{z}}(t) \in \mathbb Z[t]$, then $U(T) \in \mathbb Z_p[T]$ (\cite{Was} Lemma 7.5). 

As pointed out by Mazur \cite{Maz}, the objects analogous to these invariants are as follows. The $\mathbb Z_p$-extension $k_{\infty}/k$ is regarded as a tower of cyclic subextensions $k_n/k$ of degree $p^n$. The Galois group $Y_{k_{\infty}}$ of the maximal unramified abelian pro-$p$-extension over $k_{\infty}$ is a module over $\widehat{\varLambda}_1 \simeq \mathbb Z_p[[\mathrm{Gal}(k_{\infty}/k)]] : 1+T \leftrightarrow \gamma$, where $\mathrm{Gal}(k_{\infty}/k)=\gamma^{\mathbb Z_p}$. The Iwasawa polynomial is defined as $p^{\mu_{k_{\infty}}} P_{k_{\infty}}(T) = \varDelta_{\widehat{\varLambda}_1}^{(0)}(Y_{k_{\infty}})$ where $P_{k_{\infty}}(T)$ is a distinguished polynomial of degree $\lambda_{k_{\infty}}$. Then there is an integer $\nu_{k_{\infty}}$ satisfying Iwasawa's class number formula (cf.\ e.g.\ \cite{Was}); 
\[
v_p(|Cl(k_n)|) = \lambda_{k_{\infty}} n + \mu_{k_{\infty}} p^n + \nu_{k_{\infty}}
\]
for all sufficiently large $n$, where $Cl(k)$ denotes the ideal class group of a number field $k$ which is analogous to $H_1(M;\mathbb Z)$ of a rational homology $3$-sphere $M$. 
\[
\begin{array}{ccc}
\hbox{infinite cyclic cover }X_{\mathbf{z}} \rightarrow X &\longleftrightarrow& \hbox{$\mathbb Z_p$-extension }k_{\infty}/k \\
\cdots \rightarrow M_{\mathbf{z},p^n} \rightarrow \cdots \rightarrow S^3 &\longleftrightarrow& k \subset \cdots \subset k_n \subset \cdots \subset k_{\infty} \\\hbox{Alexander polynomial }\varDelta_{\varLambda_1}^{(0)}(H_1(X_{\mathbf{z}};\mathbb Z)) &\longleftrightarrow& \hbox{Iwasawa polynomial }\varDelta_{\widehat{\varLambda}_1}^{(0)}(Y_{k_{\infty}}) \\
\end{array}
\]

As an analogy of Iwasawa's formula, we have obtained the following theorem (cf.\ \cite{HMM} Theorem 5.1.7, \cite{KM08} Theorem 2.1 and its proof). 

\begin{theorem}\label{Iwasawa type formula}
{\em (1)} Put $v=\max\{v_p(z_i)\,|\,1 \le i \le r\,\}$. For all $n \ge v$, we have 
\[
|H_1(M_{\mathbf{z},p^n};\mathbb Z)|= |H_1(M_{\mathbf{z},p^v};\mathbb Z)| \, \Big|\! \prod_{\zeta^{p^n}=1 \atop \zeta^{p^v} \neq 1} \varDelta_{L,\mathbf{z}}(\zeta) \Big|. 
\]
{\em (2)} If $|H_1(M_{\mathbf{z},p^n};\mathbb Z)| \neq 0$ for any $n$, there is an integer $\nu_{L,\mathbf{z}}$ such that 
\[
v_p(|H_1(M_{\mathbf{z},p^n};\mathbb Z)|) = \lambda_{L,\mathbf{z}} n + \mu_{L,\mathbf{z}} p^n + \nu_{L,\mathbf{z}}
\]
for all sufficiently large $n$. 
\end{theorem}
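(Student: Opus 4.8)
The plan is to derive both statements from the $\varLambda_1$-module structure of $M := H_1(X_{\mathbf{z}};\mathbb{Z})$, whose zeroth Alexander polynomial is $\varDelta_{L,\mathbf{z}}(t)$ by definition, together with the classical resultant identity for cyclic modules. For part (1) I would first identify the first homology of the branched cover with a coinvariant module. Since the $p^n$-fold cover $X_{\mathbf{z},p^n}\to X$ is the quotient $X_{\mathbf{z}}/\langle t^{p^n}\rangle$, and $\widetilde{X}=X_{\mathbf{z}}$ is its own infinite cyclic cover, the Milnor/Wang sequence gives a short exact sequence $0\to M/(t^{p^n}-1)M\to H_1(X_{\mathbf{z},p^n};\mathbb{Z})\to\mathbb{Z}\to0$, the free quotient coming from $H_0(X_{\mathbf{z}})$. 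Passing to the Fox completion $M_{\mathbf{z},p^n}$ fills in solid tori along the branch locus, which kills peripheral classes and removes exactly this free part, so that when $H_1(M_{\mathbf{z},p^n};\mathbb{Z})$ is finite its order is that of the torsion coinvariant module. The engine is then the lemma that for a finitely generated torsion $\varLambda_1$-module $A$ with $\varDelta^{(0)}_{\varLambda_1}(A)=\delta(t)$ one has $|A/(t^N-1)A|=\prod_{\zeta^N=1}|\delta(\zeta)|$ whenever the right-hand side is nonzero; this reduces via a composition series to the cyclic case $\varLambda_1/(g)$, where $|\varLambda_1/(g,\,t^N-1)|=|\mathrm{Res}(g,\,t^N-1)|=\prod_{\zeta^N=1}|g(\zeta)|$.

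Next I would assemble the product formula and telescope it. Writing $t^{p^n}-1=\prod_{0\le m\le n}\varPhi_{p^m}(t)$ and noting that $\varDelta_{L,\mathbf{z}}(1)=0$ for $r\ge2$ forces the factor at $\zeta=1$ to be dropped (this is precisely the free part removed by branching), the finite order becomes $\big|\prod_{\zeta^{p^n}=1,\ \zeta\neq1}\varDelta_{L,\mathbf{z}}(\zeta)\big|$ up to a multiplicative constant governed by the branch locus. The role of $v=\max_i v_p(z_i)$ is that a component $K_i$ is genuinely branched in $M_{\mathbf{z},p^n}$ exactly when $z_i\not\equiv0\pmod{p^n}$, i.e.\ when $n>v_p(z_i)$; once $n\ge v$ this branching pattern has stabilized, so the constant is the same at every level $\ge v$ and cancels in the ratio. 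Dividing the level-$n$ identity by the level-$v$ identity leaves exactly the factors with $\mathrm{ord}(\zeta)=p^{v+1},\dots,p^n$, namely $\zeta^{p^n}=1$ with $\zeta^{p^v}\neq1$, which is the stated relative formula. Equivalently, one argues inductively that the branched $p$-cover $M_{\mathbf{z},p^{m+1}}\to M_{\mathbf{z},p^m}$ multiplies the homology order by $\prod_{\mathrm{ord}(\zeta)=p^{m+1}}|\varDelta_{L,\mathbf{z}}(\zeta)|$ for each $m\ge v$.

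Part (2) is then the standard Iwasawa computation. Taking $v_p$ of part (1) gives $v_p(|H_1(M_{\mathbf{z},p^n};\mathbb{Z})|)=v_p(|H_1(M_{\mathbf{z},p^v};\mathbb{Z})|)+\sum_{\zeta^{p^n}=1,\ \zeta^{p^v}\neq1}v_p(\varDelta_{L,\mathbf{z}}(\zeta))$. Substituting the Weierstrass factorization $\varDelta_{L,\mathbf{z}}(1+T)=p^{\mu_{L,\mathbf{z}}}P_{L,\mathbf{z}}(T)U(T)$ and writing each root as $\zeta=1+T$, the unit $U$ contributes nothing to $v_p$, the factor $p^{\mu_{L,\mathbf{z}}}$ contributes $\mu_{L,\mathbf{z}}$ for each of the $p^n-p^v$ roots (yielding the term $\mu_{L,\mathbf{z}}p^n$ up to the constant $-\mu_{L,\mathbf{z}}p^v$), and the distinguished polynomial $P_{L,\mathbf{z}}$ of degree $\lambda_{L,\mathbf{z}}$ contributes $\lambda_{L,\mathbf{z}}$ per level once $p^m$ exceeds the valuations of the roots of $P_{L,\mathbf{z}}$, since $\sum_{\mathrm{ord}(\zeta)=p^m}v_p(P_{L,\mathbf{z}}(\zeta-1))=\lambda_{L,\mathbf{z}}$ for such $m$. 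Summing over levels gives $\lambda_{L,\mathbf{z}}n$ plus a term that is constant for large $n$, and collecting all constants into a single integer $\nu_{L,\mathbf{z}}$ produces $\lambda_{L,\mathbf{z}}n+\mu_{L,\mathbf{z}}p^n+\nu_{L,\mathbf{z}}$ for all sufficiently large $n$.

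The main obstacle is the geometric bookkeeping in part (1): rigorously identifying $H_1$ of the Fox completion with the torsion coinvariant module, tracking how filling the branch tori removes the $\zeta=1$ contribution, and proving that the branch-locus constant genuinely stabilizes for $n\ge v$. Once this input is secured, the resultant lemma and the elementary valuation estimate for distinguished polynomials make the remainder routine.
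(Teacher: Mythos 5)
The paper does not prove this theorem from scratch: part (1) is quoted as a direct consequence of the Mayberry--Murasugi formula (\cite{MM82}, \cite{Por04}), and part (2) is deduced from (1) by the valuation computation in the proof of Theorem 5.1.7 of \cite{HMM}. Your part (2) is correct and is essentially that standard computation: the unit contributes nothing to $v_p$, the factor $p^{\mu_{L,\mathbf{z}}}$ contributes $\mu_{L,\mathbf{z}}(p^n-p^v)$, and the distinguished polynomial contributes $\lambda_{L,\mathbf{z}}$ per level once $\lambda_{L,\mathbf{z}}\,v_p(\zeta-1)=\lambda_{L,\mathbf{z}}/(p^{m-1}(p-1))$ falls below the valuations of its lower (all $\equiv 0 \bmod p$) coefficients. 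Nothing to object to there.

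The gap is in part (1), and it is exactly the step you yourself flag as ``the main obstacle.'' Two points. First, the passage from $H_1(X_{\mathbf{z},p^n};\mathbb Z)$ to $H_1(M_{\mathbf{z},p^n};\mathbb Z)$ is not just ``remove the free $\mathbb Z$ coming from $H_0$'': the Fox completion glues in a solid torus along \emph{every} component of the preimage of each $K_i$, the preimage of $K_i$ has $p^{\min(n,\,v_p(z_i))}$ components, and filling them kills the subgroup generated by all lifted meridians. The order of the resulting quotient differs from the order of the torsion of $M/(t^{p^n}-1)M$ by a combinatorial factor depending on this branching data; computing that factor is precisely the content of the Mayberry--Murasugi formula, and showing that it is literally the same at every level $n\ge v$ (so that it cancels in the ratio with level $v$) is the entire substance of the $n\ge v$ hypothesis. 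You assert both facts but prove neither. Second, your ``engine'' lemma $|A/(t^N-1)A|=\prod_{\zeta^N=1}|\delta(\zeta)|$ does not reduce to the cyclic case by a composition series as stated: orders of $f$-coinvariants are not multiplicative in short exact sequences unless you also control the $f$-torsion submodules (the multiplicative invariant is the Herbrand quotient $|A/fA|\cdot|A[f]|^{-1}$), and the clean resultant identity requires $A$ to admit a square presentation matrix with determinant $\delta$. These are exactly the inputs the paper outsources to \cite{MM82} and \cite{Por04}; without them your part (1) is a plausible outline rather than a proof, while your part (2) stands.
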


\begin{remark}{\em 
The formula of (1) is a direct consequence of Mayberry-Murasugi's formula (cf.\ \cite{MM82} \cite{Por04}). The formula of (2) is induced from (1) by similar arguments to the proof of Theorem 5.1.7 of \cite{HMM}. 
}\end{remark}

By the basic properties of Alexander polynomials, we obtain the following theorem, which decides the parity of $\lambda_{L,\mathbf{z}}$ and the existence of $L$ with prescribed $\lambda_{L,\mathbf{1}}$ and $\mu_{L,\mathbf{1}}$. 

\begin{theorem}\label{mainthm}
{\em (i)} If $r=1$, then $\lambda_{L,\mathbf{z}}=\mu_{L,\mathbf{z}}=0$ and $|H_1(M_{\mathbf{z},p^n};\mathbb Z)| \neq 0$ for any $n$. 

{\em (ii)} If $r \ge 2$, then $\lambda_{L,\mathbf{z}} \ge r-1$ and 
\[
\lambda_{L,\mathbf{z}} \equiv \bigg\{
\begin{array}{ll}
r-1 \pmod{2} & \hbox{if $p \neq 2$} \\
\deg^{\prime} \varDelta_{L,\mathbf{z}}(t) \equiv 1+\sum_{i=1}^{r} z_i(1-\sum_{j \neq i} \ell_{ij}) \pmod{2} & \hbox{if $p=2$} 
\end{array} 
\]
for any $\mathbf{z}$ such that $\varDelta_{L,\mathbf{z}}(t) \neq 0$. 

{\em (iii)} For each $r \ge 2$ and for any $0 \le \ell, m \in \mathbb Z$, there is an $r$-component link $L$ such that 
\[
\lambda_{L,\mathbf{1}} = r-1+2\ell , \hspace*{10pt} \mu_{L,\mathbf{1}}=m 
\]
and $|H_1(M_{\mathbf{1},p^n};\mathbb Z)| \neq 0$ for any $n$. 
\end{theorem}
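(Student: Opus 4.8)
I would treat the three parts separately, organizing everything around the one–variable polynomial $\varDelta_{L,\mathbf{z}}(t)$ and its reciprocal symmetry.

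For (i) the whole statement rests on the knot normalization $\varDelta_K(1)=\pm1$, where $K=L$. Since $\varDelta_{L,\mathbf{z}}(t)=\varDelta_K(t)$ and $z_1=\pm1$ forces $v=v_p(z_1)=0$, substituting $t=1+T$ produces a power series with unit constant term $\varDelta_K(1)$, hence a unit of $\widehat{\varLambda}_1$; the Weierstrass factorization then has trivial distinguished part and trivial $p$-power, so $\lambda_{L,\mathbf{z}}=\mu_{L,\mathbf{z}}=0$. For the nonvanishing I would use Theorem~\ref{Iwasawa type formula}~(1): as $v=0$ the prefactor $|H_1(M_{\mathbf{z},p^0};\mathbb{Z})|=1$ and the remaining product runs over nontrivial $p^n$th roots of unity $\zeta$, so it suffices that $\varDelta_K(\zeta)\neq0$. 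If some primitive $p^j$th root were a zero, irreducibility of $\varPhi_{p^j}$ would force $\varPhi_{p^j}\mid\varDelta_K$, whence $p=\varPhi_{p^j}(1)\mid\varDelta_K(1)=\pm1$, a contradiction.

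For the parity in (ii) the input is that $\varDelta_{L,\mathbf{z}}$ is reciprocal. Starting from $\varDelta_{L,\mathbf{z}}(t)=(t-1)\varDelta_L(t^{z_1},\dots,t^{z_r})$, the Torres duality $\varDelta_L(t_1^{-1},\dots,t_r^{-1})=(-1)^r\,\mathbf{t}^{\mathbf{n}}\varDelta_L(\mathbf{t})$ (with $n_i\equiv1+\sum_{j\neq i}\ell_{ij}\pmod2$) together with $t^{-1}-1=-t^{-1}(t-1)$ yields a functional equation $t^{d}\varDelta_{L,\mathbf{z}}(t^{-1})=\eta\,\varDelta_{L,\mathbf{z}}(t)$, $d=\deg^{\prime}\varDelta_{L,\mathbf{z}}$, with sign $\eta=(-1)^{r-1}$. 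Comparing the two expressions for the product of the roots shows that any reciprocal polynomial satisfies $\eta=(-1)^{m_1}$, where $m_1$ is the multiplicity of the root $t=1$; hence $m_1\equiv r-1\pmod2$. The involution $\alpha\mapsto\alpha^{-1}$ preserves the multiset of roots with $v_p(\alpha-1)>0$ (precisely those counted by $\lambda_{L,\mathbf{z}}$), its relevant fixed point being $\alpha=1$ when $p\neq2$ and $\alpha\in\{1,-1\}$ when $p=2$ (since $v_2(-2)=1$). Thus $\lambda_{L,\mathbf{z}}\equiv m_1\pmod2$ for $p\neq2$, giving $\lambda_{L,\mathbf{z}}\equiv r-1$; for $p=2$ the same pairing applied to all roots gives $\deg^{\prime}\varDelta_{L,\mathbf{z}}\equiv m_1+m_{-1}\equiv\lambda_{L,\mathbf{z}}\pmod2$, and a Newton-polytope count using central symmetry about $\mathbf{n}/2$ rewrites $\deg^{\prime}\varDelta_{L,\mathbf{z}}$ as $1+\sum_i z_i n_i\equiv1+\sum_i z_i(1-\sum_{j\neq i}\ell_{ij})$.

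The lower bound $\lambda_{L,\mathbf{z}}\ge r-1$ follows once $(t-1)^{r-1}\mid\varDelta_{L,\mathbf{z}}(t)$, since then $\lambda_{L,\mathbf{z}}\ge m_1\ge r-1$. This I would prove topologically: the Milnor (Wang) sequence of the infinite cyclic cover $X_{\mathbf{z}}\to X$ over $\mathbb{Q}$ shows that $H_1(X;\mathbb{Q})\to H_0(X_{\mathbf{z}};\mathbb{Q})$ is onto a one–dimensional space, so $\dim_{\mathbb{Q}}H_1(X_{\mathbf{z}};\mathbb{Q})/(t-1)=r-1$; as $\varDelta_{L,\mathbf{z}}$ is the order of this torsion $\mathbb{Q}[t^{\pm1}]$-module, the multiplicity of $(t-1)$ in it is at least the number of cyclic summands divisible by $(t-1)$, namely $r-1$.

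For (iii) I would prescribe the reduced polynomial directly. Since $\lambda_{L,\mathbf{1}},\mu_{L,\mathbf{1}}$ depend only on $\varDelta_{L,\mathbf{1}}(t)=(t-1)^{r-1}\nabla_L(t)$ (Hosokawa), I aim to realize the Hosokawa polynomial $\nabla(t)=p^{m}\prod_{i=1}^{\ell}(t^2-(2+p)t+1)$. Then $\nabla(1+T)=p^{m}(T^2-pT-p)^{\ell}$ with $T^2-pT-p$ distinguished, so $\varDelta_{L,\mathbf{1}}(1+T)=p^{m}T^{r-1}(T^2-pT-p)^{\ell}$ gives exactly $\mu_{L,\mathbf{1}}=m$ and $\lambda_{L,\mathbf{1}}=r-1+2\ell$; moreover every root $\alpha$ of $\nabla$ satisfies $\alpha+\alpha^{-1}=2+p>2$, hence is not a root of unity, so $\varDelta_{L,\mathbf{1}}(\zeta)\neq0$ at nontrivial $p^n$th roots of unity and $|H_1(M_{\mathbf{1},p^n};\mathbb{Z})|\neq0$ by Theorem~\ref{Iwasawa type formula}~(1). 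The remaining, and I expect hardest, step is to realize $\nabla$ as the Hosokawa polynomial of an $r$-component link: $\nabla$ is forced to have content $p^m$, so producing $\mu_{L,\mathbf{1}}=m>0$ is the genuine analogue of Iwasawa's construction of number fields with $\mu>0$. I would invoke the realization theorem for symmetric Laurent polynomials as Hosokawa polynomials (matching the value at $1$, which for $r=2$ is $\pm\,\mathrm{lk}(K_1,K_2)$, by a choice of linking numbers), or else build an explicit presentation of the link module and realize it by surgery, reducing general $r$ to $r=2$ by adjoining Hopf-linked components that multiply the diagonal by $(t-1)$.
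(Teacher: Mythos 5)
Your argument is correct, and parts (i) and (iii) essentially coincide with the paper's proof: the same unit/cyclotomic-value argument for (i), and the same appeal to Hosokawa's realization theorem for (iii), with $p^m\prod(t^2-(2+p)t+1)$ in place of the paper's simpler choice $p^mt^{-\ell}(t-1)^{2\ell}$ (both are symmetric of even reduced degree, so both are realizable, and both visibly give $\mu=m$, $\lambda=r-1+2\ell$ and no cyclotomic factors). The genuine divergence is in (ii). For the lower bound the paper simply cites Hillman for $(t-1)^{r-1}\mid\varDelta_{L,\mathbf{z}}(t)$, whereas you rederive it from the Wang sequence of $X_{\mathbf{z}}\to X$; that is a sound, self-contained substitute. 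For the parity the paper stays inside $\mathbb Z_p[[T]]$: it substitutes $\dot{T}=(1+T)^{-1}-1$ into the Weierstrass factorization, uses uniqueness of the distinguished factor to get $P(T)=P(-1)^{-1}(1+T)^{\lambda}P(\dot{T})$, and reads off $(-1)^{r-1}=P(-1)\equiv(-1)^{\lambda}\pmod p$ (and, when $p=2$, that $\deg U=d-\lambda$ must be even). You instead factor $\varDelta_{L,\mathbf{z}}$ over $\overline{\mathbb Q}_p$ and pair roots under $\alpha\mapsto\alpha^{-1}$: $\lambda$ counts roots with $v_p(\alpha-1)>0$, this multiset is inversion-stable with fixed point $1$ (and additionally $-1$ exactly when $p=2$, since $v_2(-2)>0$), and the sign $(-1)^{r-1}$ in the functional equation equals $(-1)^{m_1}$. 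Both routes are valid; yours makes the special role of $p=2$ (the extra fixed point $-1$) conceptually transparent, while the paper's avoids passing to an algebraic closure. Two points worth tightening in a written version: state $\eta=(-1)^{m_1}$ after normalizing $\varDelta_{L,\mathbf{z}}$ to a polynomial with nonzero constant term (it follows from $\eta=(-1)^d\prod\alpha_i$ with each inverse pair contributing $+1$), and replace the appeal to a ``Newton-polytope count'' by the direct bookkeeping of exponents in the Torres condition under $t_i=t^{z_i}$, which is all that is needed for $d\equiv 1+\sum_i z_i(1-\sum_{j\neq i}\ell_{ij})\pmod 2$.
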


\begin{proof}
(i) Note that $\mathbf{z}=\pm 1$. Since $\varDelta_{L,\mathbf{z}}(1)=\pm 1$ and $\varPhi_{p^n}(1)=p$, $\varPhi_{p^n}(t)$ does not divide $\varDelta_{L,\mathbf{z}}(t)$ and $\varDelta_{L,\mathbf{z}}(1+T) \in \widehat{\varLambda}_1^{\times}$. Therefore $\lambda_{L,\mathbf{z}}=\mu_{L,\mathbf{z}}=0$ and $|H_1(M_{\mathbf{z},p^n};\mathbb Z)| \neq 0$ for any $n$ by Theorem \ref{Iwasawa type formula} (1). 

(ii) By Corollary 4.13.2 of \cite{Hil}, $\varDelta_{L,\mathbf{z}}(t)$ is divisible by $(t-1)^{r-1}$, i.e., $P_{L,\mathbf{z}}(T)$ is divisible by $T^{r-1}$. Therefore $\lambda_{L,\mathbf{z}} \ge r-1$. We may assume that $\varDelta_{L,\mathbf{z}}(t) \in \mathbb Z[t]$ and $\varDelta_{L,\mathbf{z}}(0) \neq 0$. Put $\lambda=\lambda_{L,\mathbf{z}}$, $\mu=\mu_{L,\mathbf{z}}$, $P(T)=P_{L,\mathbf{z}}(T)$ and $\dot{T}=(1+T)^{-1}-1$. By the Torres conditions (cf.\ \cite{Hil} etc.), we have $\varDelta_{L,\mathbf{z}}(t)=(-1)^{r-1} t^d \varDelta_{L,\mathbf{z}}(t^{-1})$ where $d=\deg \varDelta_{L,\mathbf{z}}(t) \equiv 1+\sum_{i=1}^{r} z_i(1-\sum_{j \neq i} \ell_{ij}) \pmod{2}$. Then we have 
\[
\varDelta_{L,\mathbf{z}}(1+T)=p^\mu P(T)U(T) = p^\mu (-1)^{r-1} (1+T)^d P(\dot{T})U(\dot{T}) 
\]
with a distinguished polynomial $P(T)=T^{\lambda}+\sum_{i=0}^{\lambda-1} c_i T^i\equiv T^{\lambda} \pmod{p}$ and $U(T) \in \widehat{\varLambda}_1^{\times} \cap \mathbb Z_p[T]$. 
Since 
\begin{eqnarray*}
(1+T)^{\lambda}P(\dot{T}) &=& (-T)^{\lambda}+\sum_{i=0}^{\lambda-1} c_i (-T)^i (1+T)^{\lambda-i} \\
&=& P(-1)T^{\lambda} + \sum_{j=0}^{\lambda-1} \sum_{i=0}^{j} c_i(-1)^i {\lambda-i \choose j-i} T^j 
\end{eqnarray*}
and the distinguished polynomial $P(T)$ dividing $\varDelta_{L,\mathbf{z}}(1+T)$ in $\widehat{\varLambda}_1$ is unique, then $P(T)=P(-1)^{-1}(1+T)^{\lambda}P(\dot{T})$ and hence $U(T)=(-1)^{r-1} P(-1) (1+T)^{d-\lambda} U(\dot{T})$. Since $U(0)=(-1)^{r-1} P(-1) U(0)$, we have $(-1)^{r-1}=P(-1) \equiv (-1)^{\lambda} \pmod{p}$ and $U(T)=(1+T)^{d-\lambda} U(\dot{T})$. Therefore $\lambda \equiv r-1 \pmod{2}$ if $p \neq 2$. Suppose that $p=2$, and put $u(t)=U(t-1) \in \mathbb Z_2[t]$. Since $u(t)=t^{d-\lambda}u(t^{-1})$ and $u(1) = U(0) \not\equiv 0 \pmod{2}$, then $\deg u(t)=d-\lambda$ must be even. Therefore $\lambda \equiv d \pmod{2}$ if $p=2$. 

(iii) Put $\nabla(t)=p^m t^{-\ell} (t-1)^{2\ell} \in \varLambda_1$. Since $\nabla(t)=\nabla(t^{-1})$ and $\deg^{\prime} \nabla(t)$ is even, there is an $r$-component link $L$ such that $\varDelta_{L,\mathbf{1}}(t)=(t-1)^{r-1} \nabla(t)$ by \cite{Hos58}. Then $\lambda_{L,\mathbf{1}} = r-1+2\ell$ and $\mu_{L,\mathbf{1}}=m$. By Theorem \ref{Iwasawa type formula} (1), we have $|H_1(M_{\mathbf{1},p^n};\mathbb Z)| \neq 0$ for all $n$. 
\end{proof}

\begin{remark}{\em 
We note that $\lambda_{L,\mathbf{1}} \equiv r-1 \pmod{2}$ even if $p=2$. If $p$ splits completely in $k$, we have an analogy: 
\[
T^{r-1} \big| P_{L,\mathbf{1}}(T),\ \lambda_{L,\mathbf{1}} \ge r-1 \hspace*{10pt}\longleftrightarrow\hspace*{10pt} T^{r_2} \big| P_{k_{\infty}^{\mathrm{cyc}}}(T),\ \lambda_{k_{\infty}^{\mathrm{cyc}}} \ge r_2
\]
where $k_{\infty}^{\mathrm{cyc}}/k$ is the cyclotomic $\mathbb Z_p$-extension (cf.\ e.g.\ \cite{Gre76}). Refer to \cite{FOO06} (resp.\ \cite{Oza04}) for the existence of $k_{\infty}/k$ with prescribed $\lambda_{k_{\infty}}$ (resp.\ $\mu_{k_{\infty}}$). 
}\end{remark}

\section{2-component links}

In this section, we consider the Iwasawa invariants of $2$-component links. 

\begin{lemma}\label{r2lemma}
For a $2$-component link $L=K_1 \cup K_2$ and any $\mathbf{z}=(z_1,z_2)$ with $\gcd(z_1,z_2)=1$, we have $|H_1(M_{\mathbf{z},p^n};\mathbb Z)| \neq 0$ for all $0 \le n \le v=\max{\{v_p(z_1),v_p(z_2)\}}=v_p(z_1z_2)$. 
\end{lemma}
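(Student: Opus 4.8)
The plan is to reduce the assertion to the one-component (knot) case, for which finiteness of $H_1$ is already furnished by Theorem \ref{mainthm} (i). Since $\gcd(z_1,z_2)=1$, at most one of $z_1,z_2$ is divisible by $p$; relabelling the two components if necessary, I assume $v_p(z_1)=0$ and $v_p(z_2)=v=v_p(z_1z_2)$. The case $v=0$ leaves only $n=0$, where $M_{\mathbf{z},1}=S^3$ and there is nothing to prove, so I suppose $v\ge 1$ and fix $0\le n\le v$. Because $p^n \mid p^v \mid z_2$, the reduction $\sigma_{\mathbf{z}} \bmod p^n : H_1(X;\mathbb Z) \to \mathbb Z/p^n$ sends the meridian class $t_2=[m_2]$ to $z_2\equiv 0$, so the cyclic cover $X_{\mathbf{z},p^n}\to X$ is unramified along $K_2$, while along $K_1$ the meridian maps to the unit $z_1 \bmod p^n$.

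The key step is the geometric identification $M_{\mathbf{z},p^n}\cong \Sigma_{p^n}(K_1)$, the $p^n$-fold cyclic branched cover of $S^3$ branched along the knot $K_1$. Since $[m_2]\mapsto 0$, the Fox completion of $X_{\mathbf{z},p^n}\to X$ along $K_2$ is an ordinary (unbranched) Dehn filling: it reglues the preimage of a tubular neighbourhood of $K_2$ without introducing branching, hence does not alter the underlying branched cover of the remaining knot $K_1$. Concretely, $X_{\mathbf{z},p^n}$ is the restriction to $S^3\setminus(K_1\cup K_2)$ of the $p^n$-fold cover of $S^3\setminus K_1$ determined by $m_1\mapsto z_1$, and filling back the preimage of $K_2$ recovers the Fox completion of that cover, namely $\Sigma_{p^n}(K_1)$. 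As $\gcd(z_1,p)=1$, the kernel of $m_1\mapsto z_1\bmod p^n$ equals $p^n\mathbb Z$, so $\Sigma_{p^n}(K_1)$ is exactly the standard $p^n$-fold branched cover of $K_1$. Theorem \ref{mainthm} (i), applied to the one-component link $K_1$, then gives $|H_1(\Sigma_{p^n}(K_1);\mathbb Z)|\neq 0$, whence $|H_1(M_{\mathbf{z},p^n};\mathbb Z)|\neq 0$ for every $0\le n\le v$.

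The one genuinely delicate point is the homeomorphism $M_{\mathbf{z},p^n}\cong \Sigma_{p^n}(K_1)$; everything else is bookkeeping. It is tempting to read finiteness directly off the reduced Alexander polynomial, but this fails in the range $n\le v$: writing $\varDelta_{L,\mathbf{z}}(\zeta)=(\zeta-1)\varDelta_L(\zeta^{z_1},\zeta^{z_2})$, using $\zeta^{z_2}=1$ for a primitive $p^m$-th root $\zeta$ with $m\le v$, and invoking the Torres condition $\varDelta_L(t,1)\doteq \frac{t^{\ell_{12}}-1}{t-1}\varDelta_{K_1}(t)$ (cf.\ \cite{Hil}), one sees that $\varDelta_{L,\mathbf{z}}$ can vanish at such a $\zeta$ (precisely when $p^m \mid \ell_{12}$, since $\varDelta_{K_1}$ has no $p$-power root of unity as a zero by (i)) even though $M_{\mathbf{z},p^n}$ remains a rational homology sphere. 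This is exactly why the product expression of Theorem \ref{Iwasawa type formula} (1) is asserted only for $n\ge v$, and why the base range $n\le v$ must be handled by the covering-space identification above rather than by evaluating $\varDelta_{L,\mathbf{z}}$.
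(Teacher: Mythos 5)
Your proof is correct and follows essentially the same route as the paper: both reduce to the observation that for $n\le v$ the homomorphism sends $t_2$ (after normalizing $t_1\mapsto 1$) to $0$, so that $M_{\mathbf{z},p^n}$ is the $p^n$-fold cyclic branched cover of $S^3$ along $K_1$ alone, and then both conclude via $\varDelta_{K_1}(1)=\pm 1$ and the product formula for knots (your citation of Theorem \ref{mainthm} (i) is just the paper's final step packaged as a statement). Your closing remark correctly explains why the range $n\le v$ cannot be handled by evaluating $\varDelta_{L,\mathbf{z}}$, which is a useful clarification but not a difference in method.
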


\begin{proof}
We may assume that $v=v_p(z_2) \ge v_p(z_1) =0$. Since the covering space $X_{\mathbf{z},p^n}$ for $n \le v$ corresponds to the kernel of 
\[
z_1^{-1} \circ \bmod{p^n} \circ \sigma_{\mathbf{z}} : H_1(X;\mathbb Z) \rightarrow \mathbb Z/p^n \mathbb Z : t_1 \mapsto 1 \bmod{p^n}, t_2 \mapsto 0 \bmod{p^n} , 
\]
then $M_{\mathbf{z},p^n}$ is the cyclic branched covering space along $K_1$. Since $\varDelta_{K_1}(1) = \pm 1 \not\equiv 0 \pmod{p}$, then $\varPhi_{p^n}(t)$ does not divide $\varDelta_{K_1}(t)$ for all $n \ge 1$, and hence $|H_1(M_{\mathbf{z},p^n};\mathbb Z)|= \pm \prod_{\zeta^{p^n}=1} \varDelta_{K_1}(\zeta) \neq 0$ for any $n \le v$. 
\end{proof}

The following theorem shows the relationship between the linking numbers and Iwasawa invariants. 

\begin{theorem}\label{lambda1mu0}
For a $2$-component link $L$ and any $\mathbf{z}$, $\lambda_{L,\mathbf{z}}=1$, $\mu_{L,\mathbf{z}}=0$ if and only if $v_p(\ell_{12})=0$. Moreover, then $|H_1(M_{\mathbf{z},p^n};\mathbb Z)| \neq 0$ for any $n$. 
\end{theorem}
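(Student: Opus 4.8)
The plan is to reduce the whole statement to the single value $\varDelta_L(1,1)$, exploiting the multiplicativity of the Iwasawa invariants under the Weierstrass preparation theorem and then identifying that value with $\pm\ell_{12}$ via the Torres condition. First I would record the factorization that falls straight out of the definition of the reduced Alexander polynomial: substituting $t=1+T$ into $\varDelta_{L,\mathbf{z}}(t)=(t-1)\varDelta_L(t^{z_1},t^{z_2})$ gives
\[
\varDelta_{L,\mathbf{z}}(1+T)=T\cdot h(T),\qquad h(T):=\varDelta_L((1+T)^{z_1},(1+T)^{z_2})\in\widehat{\varLambda}_1.
\]
Here the explicit factor $T$ is a distinguished polynomial of degree one (with $\mu=0$), and it is exactly what forces the bound $\lambda_{L,\mathbf{z}}\ge 1$ of Theorem \ref{mainthm}(ii). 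Since the $\lambda$- and $\mu$-invariants are additive under multiplication in $\widehat{\varLambda}_1$ (the Weierstrass factorizations of two nonzero series multiply), the pair $(\lambda_{L,\mathbf{z}},\mu_{L,\mathbf{z}})=(1,0)$ holds precisely when $h(T)$ is itself a unit of $\widehat{\varLambda}_1=\mathbb{Z}_p[[T]]$, i.e.\ precisely when $v_p(h(0))=0$. (When $h\equiv 0$ the invariants are undefined; this case will correspond to $\ell_{12}=0$ and is automatically excluded from the equivalence.)

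Next I would evaluate $h(0)=\varDelta_L(1,1)$. The key input is the Torres condition for a two-component link, which in its reduced form carries the denominator $t_1-1$ peculiar to $r=2$, namely $\varDelta_L(t_1,1)\doteq\frac{t_1^{\ell_{12}}-1}{t_1-1}\,\varDelta_{K_1}(t_1)$ (cf.\ \cite{Hil}). Setting $t_1=1$ and using $\varDelta_{K_1}(1)=\pm 1$ yields $\varDelta_L(1,1)=\pm\ell_{12}$, a value independent of $\mathbf{z}$ (and well defined up to sign despite the unit ambiguity of $\varDelta_L$). Hence $v_p(h(0))=v_p(\ell_{12})$, and combining this with the previous paragraph establishes the equivalence $\lambda_{L,\mathbf{z}}=1,\ \mu_{L,\mathbf{z}}=0\iff v_p(\ell_{12})=0$. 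In particular $v_p(\ell_{12})=0$ forces $\ell_{12}\ne 0$, so $h(0)\ne 0$, so $\varDelta_{L,\mathbf{z}}\ne 0$ and the invariants are indeed defined.

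For the final (``moreover'') assertion I would argue as follows. When $v_p(\ell_{12})=0$ the series $h=U$ is a unit, so $\varDelta_{L,\mathbf{z}}(1+T)=T\,U(T)$ has its unique zero in the maximal ideal $(p,T)$ at $T=0$; equivalently, no cyclotomic polynomial $\varPhi_{p^m}(t)$ divides $\varDelta_{L,\mathbf{z}}(t)$ for $m\ge 1$, since a primitive $p^m$-th root of unity $\zeta$ gives $T=\zeta-1$ of positive valuation, at which the unit $U$ cannot vanish. Therefore $\varDelta_{L,\mathbf{z}}(\zeta)=(\zeta-1)U(\zeta-1)\ne 0$ for every $\zeta\ne 1$ of $p$-power order. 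Feeding this into the product formula of Theorem \ref{Iwasawa type formula}(1), together with the nonvanishing of the base term $|H_1(M_{\mathbf{z},p^v};\mathbb{Z})|$ supplied by Lemma \ref{r2lemma}, makes every factor nonzero, and hence $|H_1(M_{\mathbf{z},p^n};\mathbb{Z})|\ne 0$ for all $n$.

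The main obstacle is pinning down the identity $\varDelta_L(1,1)=\pm\ell_{12}$ through the correct two-component Torres formula — the version with the denominator $t_1-1$, which is what separates $r=2$ from $r\ge 3$ and what prevents the naive conclusion that $(t-1)^2$ always divides $\varDelta_{L,\mathbf{z}}(t)$. Once this single identity is in hand, the remainder is a formal consequence of the additivity of Weierstrass factorizations and of the product formula and lemma already available.
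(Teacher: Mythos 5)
Your argument is correct and follows essentially the same route as the paper: both reduce everything to the evaluation $\varDelta_L(1,1)=\pm\ell_{12}$ via the Torres conditions, deduce that $\varDelta_{L,\mathbf{z}}(1+T)/T$ is a unit of $\widehat{\varLambda}_1$ exactly when $v_p(\ell_{12})=0$ (whence $\lambda=1$, $\mu=0$ by uniqueness of the Weierstrass factorization, and $\lambda\ge 2$ or $\mu\ge 1$ otherwise), and obtain the nonvanishing of $|H_1(M_{\mathbf{z},p^n};\mathbb Z)|$ from Theorem \ref{Iwasawa type formula} (1) combined with Lemma \ref{r2lemma}. The only cosmetic difference is that you use the specialized Torres identity for $\varDelta_L(t_1,1)$ while the paper quotes the two-variable congruence modulo $(t_1-1)(t_2-1)$; both give the same key value $\varDelta_L(1,1)$.
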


\begin{proof}
By the Torres conditions \cite{Tor53}, we have 
\[
\varDelta_L(t_1,t_2)=\frac{(t_1t_2)^{\ell_{12}}-1}{t_1t_2-1}\varDelta_{K_1}(t_1)\varDelta_{K_2}(t_2)+(t_1-1)(t_2-1)f(t_1,t_2) 
\]
with some $f(t_1,t_2) \in \varLambda_2$. Then $\varDelta_{L,\mathbf{z}}(1+T)/T \equiv \varDelta_L(1,1)=\pm \ell_{12} \pmod{T}$. Suppose $v_p(\ell_{12})=0$. Then $\varPhi_{p^n}(t)$ does not divide $\varDelta_{L,\mathbf{z}}(t)/(t-1)$ for any $n \ge 1$ and $\varDelta_{L,\mathbf{z}}(1+T)/T \in \widehat{\varLambda}_1^{\times}$. By Lemma \ref{r2lemma} and Theorem \ref{Iwasawa type formula} (1), we have $\lambda_{L,\mathbf{z}}=1$, $\mu_{L,\mathbf{z}}=0$ and $|H_1(M_{\mathbf{z},p^n};\mathbb Z)| \neq 0$ for any $n$. If $v_p(\ell_{12})>0$, then $\varDelta_{L,\mathbf{z}}(1+T)/T \not\in \widehat{\varLambda}_1^{\times}$ and hence $\lambda_{L,\mathbf{z}} \ge 2$ or $\mu_{L,\mathbf{z}} \ge 1$. 
\end{proof}

Since the linking numbers are analogous to the power residue symbols, Theorem \ref{lambda1mu0} can be regarded as an analogy of Gold-Sands's theorem (cf.\ \cite{Gol74}, \cite{San} Proposition 2.1). Let $k$ be an imaginary quadratic field in which $p$ splits completely. Then $S=\{\wp_1,\wp_2\}$ and $r_2=1$. There is some $\pi_2 \in k$ such that $\wp_2^{h_k}=(\pi_2)$ as a principal ideal. Gold-Sands's theorem implies that $\lambda_{k_{\infty}}=1$ and $\mu_{k_{\infty}}=0$ (i.e., $Y_{k_{\infty}}=\mathrm{Gal}(\widetilde{k}/k_{\infty}) \simeq \mathbb Z_p$) for any $\mathbb Z_p$-extensions $k_{\infty}/k$ in which both $\wp_1$ and $\wp_2$ ramify if and only if $h_k \not\equiv 0 \pmod{p}$ and $\pi_2^{p-1} \not\equiv 1 \pmod{\wp_1^2}$, i.e., $\pi_2$ is not $p$th power residue modulo $\wp_1^2$. 
\[
\begin{array}{ccc}
L=K_1 \cup K_2,\ G_L/G_L^{\prime} \simeq \mathbb Z^2 &\longleftrightarrow& S=\{\wp_1,\wp_2\},\ G_S/G_S^{\prime} \simeq \mathbb Z_p^2 \\
|H_1(S^3;\mathbb Z)|=1,\ v_p(\ell_{12})=0 &\longleftrightarrow& h_k \not\equiv 0 \!\!\!\pmod{p},\ \pi_2^{p-1} \not\equiv 1 \!\!\!\pmod{\wp_1^2} \\
\Updownarrow && \Updownarrow \\
\lambda_{L,\mathbf{z}}=1,\ \mu_{L,\mathbf{z}}=0 &\longleftrightarrow& \lambda_{k_{\infty}}=1,\ \mu_{k_{\infty}}=0 \\
\end{array}
\]

As a generalization of this result, Ozaki \cite{Oza01} proved that if Greenberg's conjecture (cf.\ \S4) holds for $\widetilde{k}/k$ then $\lambda_{k_{\infty}}=1$ and $\mu_{k_{\infty}}=0$ for all but finitely many $k_{\infty}/k$. On the other hand, the following theorem implies that if $v_p(\ell_{12})>0$, in particular $\ell_{12}=0$, then $\lambda_{L,\mathbf{z}}$ is not necessary bounded as $\mathbf{z}$ is varied. 

\begin{figure}[h]
\caption{$L=K_1 \cup K_2$, $\ell_{12}=0$}\label{fig1}
\begin{center}
\unitlength 0.1in
\begin{picture}(48.25,15.00)(-0.25,-15.00)
%
\special{pn 8}%
\special{pa 1000 1100}%
\special{pa 1400 700}%
\special{fp}%
\special{pa 1000 700}%
\special{pa 1150 850}%
\special{fp}%
\special{pa 1400 1100}%
\special{pa 1250 950}%
\special{fp}%
%
\special{pn 8}%
\special{pa 1400 1100}%
\special{pa 1800 700}%
\special{fp}%
\special{pa 1400 700}%
\special{pa 1550 850}%
\special{fp}%
\special{pa 1800 1100}%
\special{pa 1650 950}%
\special{fp}%
%
\special{pn 8}%
\special{pa 1800 1100}%
\special{pa 2200 700}%
\special{fp}%
\special{pa 1800 700}%
\special{pa 1950 850}%
\special{fp}%
\special{pa 2200 1100}%
\special{pa 2050 950}%
\special{fp}%
%
\special{pn 8}%
\special{pa 2200 1100}%
\special{pa 2600 700}%
\special{fp}%
\special{pa 2200 700}%
\special{pa 2350 850}%
\special{fp}%
\special{pa 2600 1100}%
\special{pa 2450 950}%
\special{fp}%
%
\special{pn 8}%
\special{pa 3000 1100}%
\special{pa 3400 700}%
\special{fp}%
\special{pa 3000 700}%
\special{pa 3150 850}%
\special{fp}%
\special{pa 3400 1100}%
\special{pa 3250 950}%
\special{fp}%
%
\special{pn 8}%
\special{pa 3400 1100}%
\special{pa 3800 700}%
\special{fp}%
\special{pa 3400 700}%
\special{pa 3550 850}%
\special{fp}%
\special{pa 3800 1100}%
\special{pa 3650 950}%
\special{fp}%
%
\special{pn 8}%
\special{ar 4200 900 200 200  2.3561945 4.7123890}%
%
\special{pn 8}%
\special{ar 4000 900 200 200  5.4977871 6.2831853}%
\special{ar 4000 900 200 200  0.0000000 1.5707963}%
%
\special{pn 8}%
\special{pa 3800 1100}%
\special{pa 4000 1100}%
\special{fp}%
\special{pa 4200 700}%
\special{pa 4400 700}%
\special{fp}%
\special{pa 3800 700}%
\special{pa 4050 700}%
\special{fp}%
\special{pa 4400 1100}%
\special{pa 4150 1100}%
\special{fp}%
%
\special{pn 8}%
\special{pa 800 700}%
\special{pa 1000 700}%
\special{fp}%
\special{pa 800 1100}%
\special{pa 1000 1100}%
\special{fp}%
%
\special{pn 8}%
\special{ar 2800 300 200 300  1.1071487 5.1760366}%
%
\special{pn 8}%
\special{ar 2800 300 200 300  5.8644972 6.2831853}%
\special{ar 2800 300 200 300  0.0000000 0.4186882}%
%
\special{pn 8}%
\special{pa 800 500}%
\special{pa 2600 500}%
\special{fp}%
\special{pa 4400 500}%
\special{pa 2750 500}%
\special{fp}%
\special{pa 800 100}%
\special{pa 2600 100}%
\special{fp}%
\special{pa 4400 100}%
\special{pa 2750 100}%
\special{fp}%
%
\special{pn 8}%
\special{ar 4000 1400 200 100  4.7123890 6.2831853}%
\special{ar 4000 1400 200 100  0.0000000 0.6435011}%
%
\special{pn 8}%
\special{ar 4200 1400 200 100  1.5707963 3.7850938}%
%
\special{pn 8}%
\special{pa 4400 1500}%
\special{pa 4200 1500}%
\special{fp}%
\special{pa 4400 1300}%
\special{pa 4200 1300}%
\special{fp}%
%
\special{pn 8}%
\special{pa 800 1300}%
\special{pa 4000 1300}%
\special{fp}%
\special{pa 800 1500}%
\special{pa 4000 1500}%
\special{fp}%
%
\special{pn 8}%
\special{ar 800 1200 100 100  1.5707963 4.7123890}%
%
\special{pn 8}%
\special{ar 4400 1200 100 100  4.7123890 6.2831853}%
\special{ar 4400 1200 100 100  0.0000000 1.5707963}%
%
\special{pn 8}%
\special{ar 800 600 100 100  1.5707963 4.7123890}%
%
\special{pn 8}%
\special{ar 4400 600 100 100  4.7123890 6.2831853}%
\special{ar 4400 600 100 100  0.0000000 1.5707963}%
%
\special{pn 8}%
\special{ar 800 800 400 700  1.5707963 4.7123890}%
%
\special{pn 8}%
\special{ar 4400 800 400 700  4.7123890 6.2831853}%
\special{ar 4400 800 400 700  0.0000000 1.5707963}%
\put(24.0000,-3.0000){\makebox(0,0){$K_1$}}%
\put(2.0000,-8.0000){\makebox(0,0){$K_2$}}%
%
\special{pn 8}%
\special{pa 2600 700}%
\special{pa 3000 700}%
\special{dt 0.045}%
\special{pa 3000 700}%
\special{pa 2999 700}%
\special{dt 0.045}%
\special{pa 2600 1100}%
\special{pa 3000 1100}%
\special{dt 0.045}%
\special{pa 3000 1100}%
\special{pa 2999 1100}%
\special{dt 0.045}%
\put(14.0000,-9.0000){\makebox(0,0){$1$}}%
\put(22.0000,-9.0000){\makebox(0,0){$2$}}%
\put(34.0000,-9.0000){\makebox(0,0){$m$}}%
\end{picture}%
\end{center}
\end{figure}
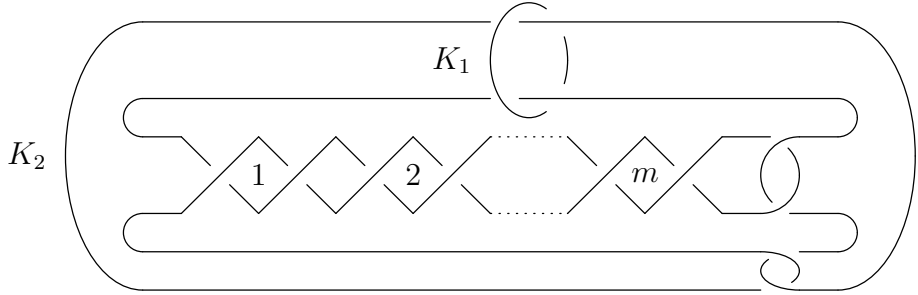

\begin{theorem}\label{biglambda}
{\em (1)} Let $L=K_1 \cup K_2$ be a $2$-component link as in Figure \ref{fig1}, where $1 \le m \in \mathbb Z$. Then 
\[
\lambda_{L,\mathbf{z}}=1+p^{v_p(z_1)}+3p^{v_p(z_2)}, \hspace*{10pt} \mu_{L,\mathbf{z}}=v_p(m)
\]
and $|H_1(M_{\mathbf{z},p^n};\mathbb Z)| \neq 0$ for any $n$. 

{\em (2)} Let $L=C(2a,2b,-2a)$ be a $2$-bridge link in a Conway's notation, where $0 \neq a, b \in \mathbb Z$. Then 
\[
\lambda_{L,\mathbf{z}}=2+p^{v_p(z_1z_2)}+(p^{v_p(a)}-1)p^{v_p(z_1+z_2)} , \hspace*{10pt} \mu_{L,\mathbf{z}}=v_p(b) . 
\]
Moreover, $|H_1(M_{\mathbf{z},p^n};\mathbb Z)| \neq 0$ for any $n$ if and only if $v_p(z_1z_2) \ge v_p(a)$. 
\end{theorem}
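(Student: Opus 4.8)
The plan is to reduce everything to an explicit computation of the two-variable Alexander polynomial $\varDelta_L(t_1,t_2)$ of each link, after which the invariants fall out of the Weierstrass preparation theorem and the elementary behaviour of cyclotomic factors. The starting point is the identity $\varDelta_{L,\mathbf{z}}(t)=(t-1)\varDelta_L(t^{z_1},t^{z_2})$ from \S2, together with two bookkeeping facts. First, $\lambda_{L,\mathbf{z}}$ and $\mu_{L,\mathbf{z}}$ are additive over products in $\widehat{\varLambda}_1$: if $\varDelta_{L,\mathbf{z}}(1+T)$ factors, then each invariant is the sum of the corresponding invariants of the factors, since a product of distinguished polynomials is distinguished and a product of units is a unit. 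Second, for a factor $t^a-1$ with $a\neq 0$ one has $t^a-1=\prod_{d\mid a}\varPhi_d(t)$, and under $t=1+T$ the factor $\varPhi_d(1+T)$ is a unit in $\widehat{\varLambda}_1$ unless $d$ is a power of $p$, while for $k\ge 1$ the polynomial $\varPhi_{p^k}(1+T)$ is itself distinguished of degree $(p-1)p^{k-1}$ with trivial $p$-content; together with the factor $\varPhi_1=t-1$ this shows that $t^a-1$ contributes exactly $1+\sum_{k=1}^{v_p(a)}(p-1)p^{k-1}=p^{v_p(a)}$ to $\lambda$ and $0$ to $\mu$.

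Granting these, the heart of the argument is the identification, up to units in $\varLambda_2$, of the Alexander polynomials. For the link in (1) I expect
\[
\varDelta_L(t_1,t_2)=m\,(t_1-1)(t_2-1)^3 ,
\]
and for the $2$-bridge link $C(2a,2b,-2a)$ in (2) I expect
\[
\varDelta_L(t_1,t_2)=b\,(t_1-1)(t_2-1)\,\frac{(t_1t_2)^a-1}{t_1t_2-1} .
\]
For (1) I would read a Wirtinger (or Fox-calculus) presentation off the diagram in Figure \ref{fig1} and compute the first elementary ideal; the vanishing of $\ell_{12}$ forces the factor $(t_1-1)(t_2-1)$ by the Torres condition, while the residual $m(t_2-1)^2$ records the $m$-fold clasp and the asymmetry of the two components. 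For (2) I would use the continued-fraction/Conway description of $C(2a,2b,-2a)$ to compute its Alexander polynomial; here $b$ appears as the content and $((t_1t_2)^a-1)/(t_1t_2-1)$ reflects the $\pm 2a$ clasps, again consistent with $\ell_{12}=0$ (indeed $\varDelta_L(t_1,1)=0$ in both cases).

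With these polynomials in hand, the formulas for $\lambda$ and $\mu$ are immediate from the two bookkeeping facts. In (1) the factors $t^{z_1}-1$ and $(t^{z_2}-1)^3$ give $p^{v_p(z_1)}+3p^{v_p(z_2)}$, the leading $(t-1)$ gives $1$, and $m$ gives $\mu=v_p(m)$. In (2) I use that $\gcd(z_1,z_2)=1$ forces $\min\{v_p(z_1),v_p(z_2)\}=0$, whence $p^{v_p(z_1)}+p^{v_p(z_2)}=1+p^{v_p(z_1z_2)}$; moreover $((t_1t_2)^a-1)/(t_1t_2-1)$ becomes $(t^{a(z_1+z_2)}-1)/(t^{z_1+z_2}-1)$ after substitution and contributes $p^{v_p(a(z_1+z_2))}-p^{v_p(z_1+z_2)}=(p^{v_p(a)}-1)p^{v_p(z_1+z_2)}$ to $\lambda$, and $b$ gives $\mu=v_p(b)$. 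Adding the $+1$ from $(t-1)$ yields both formulas. For the non-vanishing statements I would invoke Theorem \ref{Iwasawa type formula} (1): $|H_1(M_{\mathbf{z},p^n};\mathbb Z)|=0$ precisely when $\varPhi_{p^k}\mid\varDelta_{L,\mathbf{z}}$ for some $k$ with $v<k\le n$, while all levels $n\le v$ are harmless by Lemma \ref{r2lemma}. In (1) every cyclotomic factor $\varPhi_{p^k}$ dividing $\varDelta_{L,\mathbf{z}}$ has $k\le v$, so non-vanishing holds for all $n$. In (2) the only source of such factors is $(t^{a(z_1+z_2)}-1)/(t^{z_1+z_2}-1)$, divisible by $\varPhi_{p^k}$ exactly for $v_p(z_1+z_2)<k\le v_p(a)+v_p(z_1+z_2)$; using once more that $\gcd(z_1,z_2)=1$ (so $v_p(z_1+z_2)=0$ whenever $v=v_p(z_1z_2)>0$), this range avoids all $k>v$ if and only if $v_p(a)\le v_p(z_1z_2)$, which is the asserted criterion.

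The main obstacle is the first step: pinning down the two Alexander polynomials in exactly the factored forms above, including the correct unit normalisation, since everything downstream is then a mechanical application of the Weierstrass factorisation and of the divisor structure of $t^a-1$. For (2) in particular one must verify that the $2$-bridge computation genuinely yields the clean factorisation rather than an unfactored polynomial; carrying out this factorisation, and using the Torres symmetry $\varDelta_L(t_1^{-1},t_2^{-1})\doteq\varDelta_L(t_1,t_2)$ as a consistency check, is where the real work lies.
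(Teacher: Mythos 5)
Your proposal is correct and follows essentially the same route as the paper's proof: both identify the same Alexander polynomials, namely $m(t_1-1)(t_2-1)^3$ for (1) and $b(t_1-1)(t_2-1)\frac{(t_1t_2)^a-1}{t_1t_2-1}$ for (2) (the paper asserting the first by direct computation and citing Kanenobu for the second, just as you propose), substitute $t_i\mapsto t^{z_i}$, read off $\lambda$ and $\mu$ from the Weierstrass factorisation, and obtain the non-vanishing criterion from Theorem \ref{Iwasawa type formula} (1) together with Lemma \ref{r2lemma} exactly as you describe. The only cosmetic difference is in the bookkeeping: the paper isolates the distinguished part of $t^z-1$ in one stroke via the observation that $(t^z-1)/(t^{p^{v_p(z)}}-1)$ is a unit in $\widehat{\varLambda}_1$, whereas you sum the degrees $(p-1)p^{k-1}$ of the individual cyclotomic factors $\varPhi_{p^k}(1+T)$; the two computations are equivalent.
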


\begin{proof}
For any $1 \le z \in \mathbb Z$, we note that 
\[
\frac{t^{z}-1}{t^{p^{v_p(z)}}-1} = \sum_{i=0}^{z/p^{v_p(z)}-1} (t^{p^{v_p(z)}})^i \equiv z/p^{v_p(z)} \in \mathbb Z_p^{\times} \pmod{t-1} . 
\]

(1) By direct computations, one can see that $\varDelta_L(t_1,t_2)=m(t_1-1)(t_2-1)^3$. Then $\varDelta_{L,\mathbf{z}}(t)=m(t-1)(t^{z_1}-1)(t^{z_2}-1)^3$ which is not divisible by $\varPhi_{p^n}(t)$ for all $n > v$. Therefore $|H_1(M_{\mathbf{z},p^n};\mathbb Z)| \neq 0$ for any $n$ by Theorem \ref{Iwasawa type formula} (1) and Lemma \ref{r2lemma}. Since
\[
\varDelta_{L,\mathbf{z}}(1+T)=p^{v_p(m)}T ((1+T)^{p^{v_p(z_1)}}-1) ((1+T)^{p^{v_p(z_2)}}-1)^3 U(T)
\]
with some $U(T) \in \widehat{\varLambda}_1^{\times}$, we obtain the claim (1). 

(2) By \cite{Kan84} etc., we have 
\[
\varDelta_L(t_1,t_2)=b(t_1-1)(t_2-1)\frac{(t_1t_2)^a-1}{t_1t_2-1} . 
\]
Put $v=v_p(z_1z_2)$. Then $\varDelta_{L,\mathbf{z}}(t)$ is divisible by $\varPhi_{p^n}(t)$ if and only if $n \le v$ or $v_p(z_1+z_2) < n \le v_p(a(z_1+z_2))$. Note that $v_p(z_1+z_2)=0$ if $v \neq 0$. By Theorem \ref{Iwasawa type formula} (1) and Lemma \ref{r2lemma}, $|H_1(M_{\mathbf{z},p^n};\mathbb Z)|=0$ for some $n>v$ if and only if $v_p(a) > 0$ and $v < v_p(a(z_1+z_2))$, i.e., $v_p(a)>v$. Since 
\[
\varDelta_{L,\mathbf{z}}(1+T)= p^{v_p(b)}T^2((1+T)^{p^{v}}-1)\frac{(1+T)^{p^{v_p(a(z_1+z_2))}}-1}{(1+T)^{p^{v_p(z_1+z_2)}}-1}
U(T)
\]
with some $U(T) \in \widehat{\varLambda}_1^{\times}$, we obtain the claim (2). 
\end{proof}

In the case where $p=2$, Theorem \ref{mainthm} (iii) and the following theorem show the existence of a pair of $2$-component link $L$ and $\mathbf{z}$ with prescribed $\lambda_{L,\mathbf{z}} \ge 1$ and $\mu_{L,\mathbf{z}} \ge 0$. 

\begin{theorem}
Assume that $p=2$. For any $0 \le \ell, m \in \mathbb Z$, there is a pair of $2$-component link $L$ and $\mathbf{z}$ such that 
\[
\lambda_{L,\mathbf{z}} = 2+2\ell , \hspace*{10pt} \mu_{L,\mathbf{z}}=m 
\]
and $|H_1(M_{\mathbf{z},2^n};\mathbb Z)| \neq 0$ for any $n$. 
\end{theorem}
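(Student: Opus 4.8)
The plan is to exhibit, for each $\ell,m\ge 0$, an explicit pair $(L,\mathbf z)$ and to read off the invariants from Theorem \ref{Iwasawa type formula} (1), Lemma \ref{r2lemma} and the $2$-adic Weierstrass preparation, exactly as in the proofs of Theorems \ref{mainthm} and \ref{biglambda}. The design is constrained from the outset by the parity statement of Theorem \ref{mainthm} (ii): since we want $\lambda_{L,\mathbf z}=2+2\ell$ to be even while $p=2$, we must force $\deg^{\prime}\varDelta_{L,\mathbf z}(t)$ to be even, i.e. $\ell_{12}$ even and $z_1+z_2$ odd. In particular exactly one of $z_1,z_2$ is even, so $v=\max\{v_2(z_1),v_2(z_2)\}\ge 1$; this is why the statement asks for a pair $(L,\mathbf z)$ with $\mathbf z\ne\mathbf 1$ rather than the total linking number cover used in Theorem \ref{mainthm} (iii).

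The key point, special to $p=2$, is how to build an even $\lambda$ together with $|H_1(M_{\mathbf z,2^n};\mathbb Z)|\ne 0$ for all $n$. A factor $t^{2^s}-1=\prod_{j=0}^{s}\varPhi_{2^j}(t)$ contributes $2^s$ to $\lambda$, because every root of each $\varPhi_{2^j}$ is $2$-adically close to $1$; and if $\mathbf z$ is chosen with $v=s$, then by Theorem \ref{Iwasawa type formula} (1) these roots all lie in the harmless range $j\le v$ and make no $|H_1(M_{\mathbf z,2^n};\mathbb Z)|$ vanish. Thus an even entry of $\mathbf z$ simultaneously supplies the even part of $\lambda$ and hides the offending $2$-power roots of unity.

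For $\ell\ge 1$ this already suffices. Writing $\ell=2^{v_2(\ell)}q$ with $q$ odd, I would realize a link with
\[
\varDelta_L(t_1,t_2)=2^m\,(t_1-1)(t_2-1)^{q},
\]
by an explicit diagram generalizing Figure \ref{fig1} (whose link has $\varDelta_L=m(t_1-1)(t_2-1)^3$); here $q$ odd makes the Torres symmetry hold and $\ell_{12}=0$. Taking $\mathbf z=(1,2^{\,v_2(\ell)+1})$ one computes $\varDelta_{L,\mathbf z}(1+T)=2^m T^2\big((1+T)^{2^{\,v_2(\ell)+1}}-1\big)^{q}U(T)$ with $U\in\widehat{\varLambda}_1^{\times}$, whence $\lambda_{L,\mathbf z}=2+q\,2^{\,v_2(\ell)+1}=2+2\ell$ and $\mu_{L,\mathbf z}=m$; since only $\varPhi_{2^j}$ with $j\le v=v_2(\ell)+1$ occur, Theorem \ref{Iwasawa type formula} (1) and Lemma \ref{r2lemma} give $|H_1(M_{\mathbf z,2^n};\mathbb Z)|\ne 0$ for every $n$.

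The remaining case $\ell=0$ (that is $\lambda_{L,\mathbf z}=2$) is the delicate one, and is where I expect the main obstacle. With $\ell_{12}=0$ the two forced factors $(t^{z_1}-1)(t^{z_2}-1)$ together with the outer $(t-1)$ already give $\lambda\ge 4$ once $v\ge 1$, so one is compelled to use a link with nonzero even linking number. But then Torres' condition forces the content of $\varDelta_L(t_1,t_2)$ to be $1$, so $\mu_{L,\mathbf z}=m$ cannot come from an overall factor $2^m$; instead it must be produced by monomial collisions in $\varDelta_L(t^{z_1},t^{z_2})$ for a carefully chosen $\mathbf z$ with $z_1+z_2$ odd (the same phenomenon that already raises the content to $2^m$ on the diagonal in Theorem \ref{mainthm} (iii)). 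The $(2,4)$-torus link with $\mathbf z=(1,2)$ settles $m=0$, since there $\varDelta_{L,\mathbf z}(t)=(t-1)(t^3+1)$ gives $\lambda=2$, $\mu=0$; and I would treat general $m$ by adjoining, in a Torres-compatible way, binomial factors that collapse to factors of $2$ under $t_i\mapsto t^{z_i}$ while contributing no new root near $1$. The hard part will be to arrange this so that the resulting two-variable polynomial is simultaneously (i) genuinely realizable as a link (Torres sufficiency, cf.\ \cite{Hos58} \cite{Kan84}), (ii) of content with $2$-adic valuation exactly $m$, and (iii) possessed of a single near-$1$ root $t=-1$, so that $\lambda_{L,\mathbf z}$ stays equal to $2$.
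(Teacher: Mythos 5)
Your reduction of the problem via Theorem \ref{mainthm} (ii) (even $\lambda_{L,\mathbf z}$ at $p=2$ forces $z_1+z_2$ odd and $\ell_{12}$ even) is correct, and your treatment of $\ell\ge 1$ and of $\ell=m=0$ is sound and essentially the paper's: for $\ell\ge 1$ the paper realizes $\varDelta_L=2^m(t_1-1)(t_2-1)(t_1t_2+t_1^{-1}t_2^{-1})^{\max\{0,\ell-2\}}$ with $\mathbf z=(1,4)$ or $(1,2)$ via the same corollary of Bailey's theorem you would need for your variant $2^m(t_1-1)(t_2-1)^{q}$ with $\mathbf z=(1,2^{v_2(\ell)+1})$ (either choice works, and Figure \ref{fig1} is only the case $q=3$, so you should cite \cite{Hil} Corollary 7.4.1 rather than gesture at a diagram); for $\ell=m=0$ you use exactly the paper's example $L=C(4)$, $\mathbf z=(1,2)$.

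The case $\ell=0$, $m\ge 1$, however, is a genuine gap, not a delicate point you have deferred: it is the main content of the theorem. You correctly diagnose the obstruction --- $\ell_{12}$ must be a nonzero even integer, hence $\varDelta_L$ has content $1$ and the factor $2^m$ must be manufactured by the substitution $t_i\mapsto t^{z_i}$ --- but ``adjoining binomial factors that collapse to factors of $2$'' is precisely what needs to be proved possible while keeping Torres symmetry, realizability, $\lambda=2$, and nonvanishing of all $|H_1(M_{\mathbf z,2^n};\mathbb Z)|$. The paper does this by a resultant argument: with $\nu(t)=(t^{2^m}-t^{-2^m})/(t-t^{-1})=N(t+t^{-1})$ and $\beta(t)=t^{-2^m}(t-1)(t^{2^{m+1}-1}-1)=B(t+t^{-1})$ one computes $\mathrm{Res}_{\mathbb Z}(N,B)=\pm 2^m$, yielding a B\'ezout identity $2^m=\nu(t)F(t+t^{-1})+\beta(t)G(t+t^{-1})$ with $F(2)=1$; then
\[
\varDelta(t_1,t_2)=\frac{(t_1t_2)^{2^{m+1}}-1}{t_1t_2-1}\,f(t_1,t_2)-(t_1-1)(t_2-1)\,\frac{(t_1t_2)^{2^{m+1}-1}-1}{t_1t_2-1}\,g(t_1,t_2)
\]
(with $f,g$ the lifts of $F,G$ in $t_1t_2+t_1^{-1}t_2^{-1}$) satisfies the Torres conditions with $\ell_{12}=2^{m+1}$, is realizable up to a factor $f^{z}$ by Bailey's theorem, and for $\mathbf z=(2,-1)$ collapses to $\varDelta_{L,\mathbf z}(t)=2^m(t^2-1)F(t+t^{-1})^{z}$, where $F((1+T)+(1+T)^{-1})\in\widehat{\varLambda}_1^{\times}$ because $F(2)=1$; this gives $\lambda_{L,\mathbf z}=2$, $\mu_{L,\mathbf z}=m$ and no zeros at $2$-power roots of unity. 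Without an argument of this kind your proposal proves the theorem only for $\ell\ge 1$ or $m=0$.
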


\begin{proof}
First, we consider the case $\ell \ge 1$. Put 
\[
\varDelta(t_1,t_2)=2^m (t_1-1)(t_2-1)(t_1t_2+t_1^{-1}t_2^{-1})^{\max\{0,\ell-2\}}
\]
and put $\mathbf{z}=(1,4)$ if $\ell \ge 2$, and $\mathbf{z}=(1,2)$ if $\ell=1$. By a consequence of Bailey's theorem (cf.\ \cite{Hil} Corollary 7.4.1, \cite{Lev88}), there is a $2$-component link $L$ such that $\varDelta_L=\varDelta(t_1,t_2)$ (and $\ell_{12}=0$). Then 
\[
\varDelta_{L,\mathbf{z}}(t)=
\bigg\{
\begin{array}{ll}
2^m (t-1)^2(t^4-1)(t^{10}+1)^{\ell-2} t^{-5(\ell-2)} & \hbox{if $\ell \ge 2$} \\ 
2^m (t-1)^2(t^2-1) & \hbox{if $\ell =1$} 
\end{array} 
\]
and $|H_1(M_{\mathbf{z},2^n};\mathbb Z)| \neq 0$ for any $n$ by Theorem \ref{Iwasawa type formula} (1) and Lemma \ref{r2lemma}. Since 
\[
\varDelta_{L,\mathbf{z}}(1+T)=
\bigg\{
\begin{array}{ll}
2^m T^2((1+T)^4-1)((1+T)^2+1)^{\ell-2} U(T) & \hbox{if $\ell \ge 2$} \\ 
2^m T^2((1+T)^2-1) U(T) & \hbox{if $\ell =1$} 
\end{array}
\]
with some $U(T) \in \widehat{\varLambda}_1^{\times}$, we have $\lambda_{L,\mathbf{z}} = 2+2\ell$ and $\mu_{L,\mathbf{z}}=m$. 

If $\ell=m=0$, put $L=C(4)$ the $2$-bridge link, and put $\mathbf{z}=(1,2)$. Then we have $\varDelta_L=t_1t_2+1$ by \cite{Kan84} and hence $\varDelta_{L,\mathbf{z}}(t)=(t-1)(t^3+1)$. By Theorem \ref{Iwasawa type formula} (1) and Lemma \ref{r2lemma}, $|H_1(M_{\mathbf{z},2^n};\mathbb Z)| \neq 0$ for any $n$. Since $\varDelta_{L,\mathbf{z}}(1+T)=T((1+T)^3+1) \equiv T^2(1+T+T^2) \not\equiv 0 \pmod{2}$, we have $\lambda_{L,\mathbf{z}}=2$ and $\mu_{L,\mathbf{z}}=0$. 

Suppose that $\ell=0$ and $m \ge 1$. Put $\nu(t)=(t^{2^m}-t^{-2^m})/(t-t^{-1})$ and $\beta(t)=t^{-2^m}(t-1)(t^{2^{m+1}-1}-1) \in \varLambda_1$. Since $\nu(t^{-1})=\nu(t)$ and $\beta(t^{-1})=\beta(t)$, there are monic polynomials $N(x)$, $B(x) \in \mathbb Z[x]$ such that $N(t+t^{-1})=\nu(t)$ and $B(t+t^{-1})=\beta(t)$. Then $\deg N(x)=2^m-1$ and $\deg B(x)=2^m$. Note that the determinant of the Sylvester matrix $Q$ of $N(x)$ and $B(x)$ is the resultant $\mathrm{Res}_{\mathbb Z}(N,B)$ of $N(x)$ and $B(x)$. Let $\zeta$ be a primitive $2^{m+1}$th root of $1$. Since the roots of $N(x)$ are $\zeta^i+\zeta^{-i}$ $(i=1,2,\cdots,2^m-1)$, we have 
\[
|\mathrm{Res}_{\mathbb Z}(N,B)|^2
= \left|\prod_{i=1}^{2^m-1} B(\zeta^i+\zeta^{-i})\right|^2
= \left|\prod_{i=1}^{2^{m+1}-1} \beta(\zeta^i)\right| |\beta(-1)|^{-1}
= 2^{2m}
\]
and hence $\mathrm{Res}_{\mathbb Z}(N,B)=\pm 2^m$. Since each row of $Q$ is contained in the kernel of the surjective homomorphism $\mathbb Z^{2^{m+1}-1} \rightarrow \mathbb Z[x]/(N,B) : (a_{2^{m+1}-2},\cdots,a_1,a_0) \mapsto a_{2^{m+1}-2}x^{2^{m+1}-2}+\cdots+a_1x+a_0 \bmod{(N,B)}$, we have 
\[
2^m = \pm \mathrm{Res}_{\mathbb Z}(N,B) \in E_{\mathbb Z}^{(0)}(\mathbb Z[x]/(N,B)) \subset \mathrm{Ann}_{\mathbb Z}(\mathbb Z[x]/(N,B)) . 
\]
Therefore there exist some $F(x)$, $G(x) \in \mathbb Z[x]$ such that $2^m=N(x)F(x)+B(x)G(x)=\nu(t)F(t+t^{-1})+\beta(t)G(t+t^{-1})$. Put $f(t_1,t_2)=F(t_1t_2+t_1^{-1}t_2^{-1})$, $g(t_1,t_2)=G(t_1t_2+t_1^{-1}t_2^{-1})$ and put 
\[
\varDelta(t_1,t_2)=\frac{(t_1t_2)^{2^{m+1}}-1}{t_1t_2-1} f(t_1,t_2) - (t_1-1)(t_2-1) \frac{(t_1t_2)^{2^{m+1}-1}-1}{t_1t_2-1} g(t_1,t_2). 
\]
Since $\nu(1)=2^m$ and $\beta(1)=0$, then $f(1,1)=F(2)=1$ and hence $F(t+t^{-1})$ is not divisible by $\varPhi_{2^n}(t)$ for all $n$. By a consequence of Bailey's theorem (cf.\ \cite{Hil} Theorem 7.4, \cite{Lev88}), there is a $2$-component link $L$ such that $\varDelta_L=f(t_1,t_2)^z \varDelta(t_1,t_2)$ for some $z \ge 0$ (and $\ell_{12}=2^{m+1}$). For $\mathbf{z}=(2,-1)$, we have 
\begin{eqnarray*}
\varDelta_{L,\mathbf{z}}(t) &=& (t^2-1)F(t+t^{-1})^z \big( \nu(t)F(t+t^{-1})+\beta(t)G(t+t^{-1}) \big) \\
&=& 2^m (t^2-1)F(t+t^{-1})^z 
\end{eqnarray*}
and $|H_1(M_{\mathbf{z},2^n};\mathbb Z)| \neq 0$ for any $n$ by Theorem \ref{Iwasawa type formula} (1) and Lemma \ref{r2lemma}. Since $F((1+T)+(1+T)^{-1}) \in \widehat{\varLambda}_1^{\times}$, we have $\lambda_{L,\mathbf{z}} = 2$ and $\mu_{L,\mathbf{z}}=m$. 
\end{proof}

\begin{remark}{\em 
If $\ell_{12}=0$, then $\varDelta_L(t_1,t_2)$ is divisible by $(t_1-1)(t_2-1)$ and therefore $\lambda_{L,\mathbf{z}} \ge 3$. Since $\varDelta_L(1,1)=\pm \ell_{12}$, we have $\mu_{L,\mathbf{z}} \le v_p(\ell_{12})$ if $\ell_{12} \neq 0$. If $p=2$ and $\lambda_{L,\mathbf{z}}$ is even, then $\ell_{12}$ is even and $z_1+z_2$ is odd by Theorem \ref{mainthm} (ii). 
}\end{remark}

\section{Greenberg type problem}

We shall consider a problem analogous to Greenberg's conjecture. Put the ring $\widehat{\varLambda}_r=\mathbb Z_p[[T_1,T_2,\cdots,T_r]]$ of $r$-variable formal power series. Let $G$ be a pro-$p$ group with a surjective homomorphism $\widehat{\psi} : G \rightarrow G/G^{\prime}=\prod_{i=1}^{r} \gamma_i^{\mathbb Z_p} \simeq \mathbb Z_p^r$, and $G^{\prime\prime}$ be the commutator subgroup of $G^{\prime}$. Then $G^{\prime}/G^{\prime\prime}$ is a module over $\widehat{\varLambda}_r \simeq \mathbb Z_p[[G/G^{\prime}]] : 1+T_i \leftrightarrow \gamma_i$, and the completed differential module of $G$ is defined as 
\[
\mathfrak{A}_{G}=\bigoplus_{g \in G} \widehat{\varLambda}_r dg \Big/ \big\langle d(g_1g_2)-dg_1-\widehat{\psi}(g_1)dg_2\,(g_i \in G) \big\rangle_{\widehat{\varLambda}_r}
\]
(cf.\ \cite{Mor} \S9.3). For the pro-$p$ group $G_S$, $\mathfrak{X}_S=G_S^{\prime}/G_S^{\prime\prime}$ is the $S$-ramified Iwasawa module which is a finitely generated $\widehat{\varLambda}_{r_2+1}$-module, and we have the completed Crowell exact sequence 
\[
0 \rightarrow \mathfrak{X}_S \stackrel{\widehat{\theta}_1}{\rightarrow} \mathfrak{A}_{G_S} \stackrel{\widehat{\theta}_2}{\rightarrow} \widehat{\varLambda}_{r_2+1} \rightarrow \mathbb Z_p \rightarrow 0
\]
where $\widehat{\theta}_1(gG_S^{\prime\prime})=dg$ and $\widehat{\theta}_2(dg)=\widehat{\psi}(g)-1$ (cf.\ \cite{Mor02} \S2.2, \cite{Mor} \S9.4, \cite{NQD83}). Let $M_S$ be the $\widehat{\varLambda}_{r_2+1}$-submodule of $\mathfrak{A}_{G_S}$ generated by $d\tau$ where $\tau$ are the elements of the inertia subgroups of $G_S$. Then $M_S \cap \mathrm{Ker}\,\widehat{\theta}_2$ is generated by $d\tau$ such that $\tau|_{\widetilde{k}}=1$, and hence $Y_{\widetilde{k}}=\mathfrak{X}_S/{\widehat{\theta}_1}^{-1}(M_S \cap \mathrm{Ker}\,\widehat{\theta}_2)$ is the Galois group of the maximal unramified abelian pro-$p$-extension of $\widetilde{k}$. Greenberg \cite{Gre01} conjectured that the unramified Iwasawa module $Y_{\widetilde{k}}$ is a pseudonull $\widehat{\varLambda}_{r_2+1}$-module. 

The analogue of $Y_{\widetilde{k}}$ is defined in a similar way as follows. Let $\psi : G_L \rightarrow G_L/G_L^{\prime}$ be the natural surjective homomorphism, and put the differential module 
\[
\mathfrak{A}_{L}=\bigoplus_{g \in G_L} \varLambda_r dg \Big/ \big\langle d(g_1g_2)-dg_1-\psi(g_1)dg_2\,(g_i \in G_L) \big\rangle_{\varLambda_r}
\]
of $G_L$. Let $[\widetilde{m}_i] \in A_L$ be the class of a lift $\widetilde{m}_i$ of $m_i$ with endpoints in $\pi^{-1}(\ast)$, and $M_L$ the $\varLambda_r$-submodule of $A_L$ generated by $[\widetilde{m}_i]$ ($1 \le i \le r$). Since $\mathfrak{A}_{L} \simeq A_L : d[m_i] \leftrightarrow [\widetilde{m}_i]$, $M_L$ is an analogue of $M_S$, and we have the Crowell exact sequence 
\[
0 \rightarrow B_L \stackrel{\theta_1}{\rightarrow} A_L \stackrel{\theta_2}{\rightarrow} \varLambda_r \rightarrow \mathbb Z \rightarrow 0
\]
(cf.\ \cite{Hil} p.70, 78). Thus the analogous module $Y_L=B_L/{\theta_1}^{-1}(M_L \cap \mathrm{Ker}\,\theta_2)$ is defined. 
\[
\hbox{``unramified" link module }Y_L \hspace*{10pt}\longleftrightarrow\hspace*{10pt} \hbox{unramified Iwasawa module }Y_{\widetilde{k}}
\]

\begin{remark}{\em 
It is known that $\mathrm{rank}_{\widehat{\varLambda}_{r_2+1}} \mathfrak{X}_S=r_2$ and $\mathfrak{X}_S$ has no nonzero pseudonull $\widehat{\varLambda}_{r_2+1}$-submodule (cf.\ \cite{Gre78} \cite{NQD83}). As analogous properties, we know that $\mathrm{rank}_{\varLambda_r} B_L \le r-1$ and that $B_L$ has no nonzero pseudonull $\varLambda_r$-submodule if $\mathrm{rank}_{\varLambda_r} B_L=0$ (cf.\ \cite{Kaw96} Corollary 7.3.13, \cite{Hil} Theorem 4.12 (2)). 
\[
\mathrm{rank}_{\varLambda_r} B_L \le r-1 \hspace*{10pt}\longleftrightarrow\hspace*{10pt} \mathrm{rank}_{\widehat{\varLambda}_{r_2+1}} \mathfrak{X}_S=r_2
\]
}\end{remark}

Then the following problem arises as an analogue of Greenberg's conjecture. 

\begin{problem}
Is $Y_L$ a pseudonull $\varLambda_r$-module ?
\end{problem}

For this problem, we obtain the following criteria and examples. We denote by $\varLambda_{r,J}$ the localization of $\varLambda_r$ by the multiplicative system $J=\{f \in \varLambda_r \,|\, f(1,1,\cdots,1)=1\}$. Note that $\varLambda_{r,J}$ is flat over $\varLambda_r$ (cf.\ \cite{Mat} \S7).

\begin{theorem}
{\em (i)} If $\varDelta_L$ has no prime factor $f$ such that $f(1,1,\cdots,1)=\pm 1$, then $Y_L$ is a pseudonull $\varLambda_r$-module. 

{\em (ii)} If $r=1$, then $Y_L$ is not a pseudonull $\varLambda_1$-module. 

\end{theorem}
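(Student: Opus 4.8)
The plan is to reduce both parts to the single module $A_L/M_L$. The Crowell sequence gives $\theta_2(M_L)=(t_1-1,\dots,t_r-1)=\mathrm{Im}\,\theta_2$, so every $a\in A_L$ differs from some element of $M_L$ by an element of $\mathrm{Ker}\,\theta_2$, whence $A_L=M_L+\mathrm{Ker}\,\theta_2=M_L+\theta_1(B_L)$. The second isomorphism theorem then yields
\[
Y_L=B_L/\theta_1^{-1}(M_L\cap\mathrm{Ker}\,\theta_2)\;\cong\;(\theta_1(B_L)+M_L)/M_L=A_L/M_L ,
\]
and I would use this identification throughout. For part (ii), when $r=1$ the map $f[\widetilde m_1]\mapsto\theta_2(f[\widetilde m_1])=f(t-1)$ is injective because $\varLambda_1$ is a domain, so $M_L\cap\mathrm{Ker}\,\theta_2=0$ and $Y_L\cong B_L$. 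Since $\mathrm{rank}_{\varLambda_1}B_L\le r-1=0$, the Remark says $B_L$ is torsion with no nonzero pseudonull submodule; a nonzero such module cannot be pseudonull (else it would be a nonzero pseudonull submodule of itself), equivalently $\varDelta^{(0)}_{\varLambda_1}(B_L)=\varDelta_L$ is a nonunit so that $\mathrm{Supp}(B_L)$ has a codimension-one component. This proves (ii) whenever $B_L\neq0$, equivalently $\varDelta_L\neq\pm1$; the excluded case $\varDelta_L=\pm1$ forces $B_L=0$ by the Remark and is the trivial exception.

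For part (i) the goal is to show $A_L/M_L$ has no codimension-one support. The essential input is a Fox-calculus identity: writing a Wirtinger meridian of $K_i$ as $x=wm_iw^{-1}$ and using $\psi(wm_iw^{-1})=\psi(m_i)=t_i$,
\[
[\widetilde x]=d(wm_iw^{-1})=(1-t_i)\,dw+\psi(w)[\widetilde m_i]\in M_L+I\cdot A_L,\qquad I=(t_1-1,\dots,t_r-1).
\]
Since such classes generate the finitely generated module $A_L$, this gives $A_L=M_L+I\cdot A_L$, that is, $A_L/M_L=I\,(A_L/M_L)$.

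I would then localize at $J=1+I$. For $a\in I$ and $s\in J$ the element $s+a$ again lies in $J$ (evaluate at $(1,\dots,1)$), so $1+I\varLambda_{r,J}\subseteq\varLambda_{r,J}^{\times}$; thus $I\varLambda_{r,J}$ is contained in the Jacobson radical and Nakayama's lemma applied to $(A_L/M_L)_J$ forces $(A_L/M_L)_J=0$. Because the zero ideal survives in $\varLambda_{r,J}$, this already shows $Y_L$ is $\varLambda_r$-torsion; and since a height-one prime $(f)$ becomes a unit in $\varLambda_{r,J}$ precisely when $f(1,\dots,1)=\pm1$, the codimension-one support of $Y_L$ is contained in $\{(f):f(1,\dots,1)=\pm1\}$. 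The hypothesis presupposes $\varDelta_L\neq0$, so $B_L$ is torsion with codimension-one support $V(\varDelta_L)$; as $Y_L$ is a quotient of $B_L$, its codimension-one support lies in $\{(f):f\mid\varDelta_L,\ f(1,\dots,1)=\pm1\}$, which is empty by hypothesis. Hence $Y_L$ is supported only in codimension $\ge2$ and is pseudonull.

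The main obstacle is the Fox-calculus step: one must check that the chosen meridians $[\widetilde m_i]$ together with $I\cdot A_L$ recover all of $A_L$, which rests on every generator in the Wirtinger presentation being conjugate to some $m_i$ and on the conjugacy identity $\psi(wm_iw^{-1})=t_i$. Once $A_L=M_L+I\cdot A_L$ is established, the Nakayama argument over the flat localization $\varLambda_{r,J}$ and the support bookkeeping are routine.
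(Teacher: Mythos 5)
Your proposal is correct, and its overall skeleton matches the paper's: both parts hinge on the vanishing $(A_L/M_L)\otimes_{\varLambda_r}\varLambda_{r,J}=0$ together with the fact that $B_L$ is torsion with no nonzero pseudonull submodule when $\mathrm{rank}_{\varLambda_r}B_L=0$. The differences are worth noting. First, you make explicit the isomorphism $Y_L\simeq A_L/M_L$ (via $\theta_2(M_L)=\mathrm{Im}\,\theta_2$ and the second isomorphism theorem), which the paper only uses implicitly when it passes from $(A_L/M_L)\otimes\varLambda_{r,J}=0$ to $Y_L\otimes\varLambda_{r,J}=0$. Second, where the paper simply cites Levine's theorem for the localization vanishing, you reprove it from scratch: the Wirtinger/Fox-calculus identity $d(wm_iw^{-1})=(1-t_i)dw+\psi(w)dm_i$ gives $A_L=M_L+I\cdot A_L$ with $I$ the augmentation ideal, and since $I\varLambda_{r,J}$ lies in the Jacobson radical of $\varLambda_{r,J}$, Nakayama finishes it; this is a valid and self-contained substitute for the citation. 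Your endgame for (i) --- no height-one prime of $\mathrm{Supp}(Y_L)$ survives in $\varLambda_{r,J}$, and all such primes divide $\varDelta_L$ --- is logically equivalent to the paper's observation that $\mathrm{Ann}_{\varLambda_r}Y_L$ contains both $\varDelta_L$ and an element of $J$, which are coprime by hypothesis. For (ii), your direct argument that $M_L\cap\mathrm{Ker}\,\theta_2=0$ (from $\theta_2(f[\widetilde m_1])=f\cdot(t-1)$ and $\varLambda_1$ being a domain) is actually cleaner than the paper's, which invokes Levine again to see $M_L\simeq\varLambda_1$. Finally, you are right to flag the degenerate case $B_L=0$ (equivalently $\varDelta_L$ a unit, e.g.\ the unknot or an Alexander-polynomial-one knot), where $Y_L=0$ is pseudonull and statement (ii) fails as literally written; the paper's proof passes over this silently, so your explicit caveat is an improvement rather than a defect.
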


\begin{proof}
(i) By Levine's theorem \cite{Lev83} (cf.\ \cite{Hil} Theorem 4.13), we have $(A_L/M_L) \otimes_{\varLambda_r}\varLambda_{r,J}=0$ and hence $Y_L \otimes_{\varLambda_r}\varLambda_{r,J}=0$. Then there exists some $f \in \mathrm{Ann}_{\varLambda_r}Y_L$ such that $f \in J$. On the other hand, $\varDelta_L \in E_{\varLambda_r}^{(0)}(B_L) \subset \mathrm{Ann}_{\varLambda_r}B_L \subset \mathrm{Ann}_{\varLambda_r}Y_L$. Since $\varDelta_L$ and $f$ are relatively prime in $\varLambda_r$ by the assumption, the divisorial hull of $\mathrm{Ann}_{\varLambda_r}Y_L$ is $\varLambda_r$, i.e., $Y_L$ is pseudonull. 

(ii) By Theorem 4.12 (1) (2) of \cite{Hil}, $A_L \simeq B_L \oplus \varLambda_1$, and $B_L$ is $\varLambda_1$-torsion and has no nonzero pseudonull $\varLambda_1$-submodule. Since $(A_L/M_L) \otimes_{\varLambda_1}\varLambda_{1,J}=0$ by Levine's theorem \cite{Lev83} again, $M_L=\varLambda_1 [\widetilde{m}_1] \simeq \varLambda_1$ and hence $M_L \cap \mathrm{Ker}\,\theta_2=M_L \cap \theta_1(B_L)=0$. Therefore $Y_L=B_L$ is not pseudonull. 
\end{proof}

\begin{example}{\em 
(1) Let $L$ be a $2$-component link as in Figure \ref{fig1}. Then $\varDelta_L=m(t_1-1)(t_2-1)^3$ and hence $Y_L$ is a pseudonull $\varLambda_2$-module. 

(2) Let $L=C(2p^m,2b,-2p^m)$ be a $2$-bridge link where $p$ is a prime number. Then we have $\varDelta_L=b(t_1-1)(t_2-1) \prod_{n=1}^{m} \varPhi_{p^n}(t_1t_2)$ as in the proof of Theorem \ref{biglambda} (2). Note that $\varPhi_{p^n}(t_1t_2) \not\in J$ since $\varPhi_{p^n}(1)=p$. Suppose $\varPhi_{p^n}(t_1t_2)$ has a factor $f \in J \cap \mathbb Z[t_1,t_2]$ such that $f(0,0) \neq 0$, and put $g=\varPhi_{p^n}(t_1t_2)/f$. We denote by $\deg_{\,i}$ the degree as a polynomial of one variable $t_i$. Since $\varPhi_{p^n}(t_1)$ is irreducible, we have $g(t_1,1)=\varPhi_{p^n}(t_1)$ and $f(t_1,1)=1$. Then $\deg_1 g(t_1,t_2) \ge \deg_1 g(t_1,1) = \deg_1 \varPhi_{p^n}(t_1t_2)$ and hence $\deg_1 f(t_1,t_2)=0$. Similarly, $\deg_2 f(t_1,t_2)=0$. This implies that $f=1$. Therefore $Y_L$ is a pseudonull $\varLambda_2$-module. 
}\end{example}

\section{Pro-$p$ link module}

We can also regard the pro-$p$ completion $\widehat{G}_L$ of $G_L$ as an analogue of $G_S$. Then $\widehat{G}_L/\widehat{G}_L^{\prime} \simeq \mathbb Z_p^r$, where $\widehat{G}_L^{\prime}$ is the commutator subgroup of $\widehat{G}_L$. By identifying $t_i=T_i+1$, $\varLambda_r$ and $\varLambda_{r,J}$ are regarded as subrings of $\widehat{\varLambda}_r \simeq \mathbb Z_p[[\widehat{G}_L/\widehat{G}_L^{\prime}]]$. Since $\widehat{\varLambda}_r \simeq \varprojlim \varLambda_{r,J}/\mathfrak{m}^n \simeq \varprojlim \varLambda_r/\mathfrak{m}^n$ where $\mathfrak{m}=(p,T_1,T_2,\cdots,T_r)$ is the maximal ideal, $\widehat{\varLambda}_r$ is flat over $\varLambda_r$ and over $\varLambda_{r,J}$ (cf.\ \cite{Mat} Theorem 8.8). By the following proposition, $\widehat{B}_L=B_L \otimes_{\varLambda_r}\widehat{\varLambda}_r$ is also an analogue of $\mathfrak{X}_S$. Let $\widehat{G}_L^{\prime\prime}$ be the commutator subgroup of $\widehat{G}_L^{\prime}$, and put the completed Alexander module $\widehat{A}_L=A_L \otimes_{\varLambda_r}\widehat{\varLambda}_r$. 

\begin{proposition}
$\widehat{B}_L \simeq \widehat{G}_L^{\prime}/\widehat{G}_L^{\prime\prime}$. 
\end{proposition}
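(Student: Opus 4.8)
The plan is to deduce the isomorphism from the two Crowell exact sequences by base change, reducing everything to the compatibility of the differential module with pro-$p$ completion. First I would recall the integral Crowell exact sequence $0 \to B_L \xrightarrow{\theta_1} A_L \xrightarrow{\theta_2} \varLambda_r \to \mathbb{Z} \to 0$ from Section 4, and the completed Crowell exact sequence attached to the pro-$p$ group $\widehat{G}_L$, which is the exact analogue (with $G_S$ replaced by $\widehat{G}_L$) of the sequence displayed for $G_S$, namely $0 \to \widehat{G}_L^{\prime}/\widehat{G}_L^{\prime\prime} \xrightarrow{\widehat{\theta}_1} \mathfrak{A}_{\widehat{G}_L} \xrightarrow{\widehat{\theta}_2} \widehat{\varLambda}_r \to \mathbb{Z}_p \to 0$, where $\widehat{\theta}_2(dg) = \widehat{\psi}(g) - 1$. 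In particular $\widehat{G}_L^{\prime}/\widehat{G}_L^{\prime\prime}$ is identified with $\ker \widehat{\theta}_2$.

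Next I would apply $- \otimes_{\varLambda_r} \widehat{\varLambda}_r$ to the integral sequence. Since $\widehat{\varLambda}_r$ is flat over $\varLambda_r$ (recalled in Section 5), exactness is preserved, and using the identification $t_i = 1 + T_i$ one computes $\mathbb{Z} \otimes_{\varLambda_r} \widehat{\varLambda}_r \simeq \widehat{\varLambda}_r/(T_1, \ldots, T_r) \simeq \mathbb{Z}_p$. This yields an exact sequence $0 \to \widehat{B}_L \to \widehat{A}_L \to \widehat{\varLambda}_r \to \mathbb{Z}_p \to 0$ in which the map $\widehat{A}_L \to \widehat{\varLambda}_r$ is $\theta_2 \otimes \mathrm{id}$ and the terminal augmentation $\widehat{\varLambda}_r \to \mathbb{Z}_p$ sends $T_i \mapsto 0$. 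Consequently $\widehat{B}_L \simeq \ker(\theta_2 \otimes \mathrm{id})$.

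The heart of the argument is to identify $\widehat{A}_L$ with $\mathfrak{A}_{\widehat{G}_L}$ compatibly with the maps to $\widehat{\varLambda}_r$. Using the isomorphism $\mathfrak{A}_L \simeq A_L$ from Section 4, it suffices to show that the natural map $\mathfrak{A}_L \otimes_{\varLambda_r} \widehat{\varLambda}_r \to \mathfrak{A}_{\widehat{G}_L}$, induced by $dg \otimes 1 \mapsto dg$, is an isomorphism. Fixing a finite presentation $G_L = \langle x_1, \ldots, x_n \mid r_1, \ldots, r_s \rangle$, Fox free differential calculus presents $\mathfrak{A}_L$ as the cokernel of the abelianized Jacobian $(\psi(\partial r_j / \partial x_i))$ over $\varLambda_r$. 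The pro-$p$ completion $\widehat{G}_L$ is presented as a pro-$p$ group by the images of the same generators and relators, and its completed differential module $\mathfrak{A}_{\widehat{G}_L}$ is presented by the image of the very same Jacobian under $\varLambda_r \to \widehat{\varLambda}_r$. Since $- \otimes_{\varLambda_r} \widehat{\varLambda}_r$ is right exact, it carries the presentation of $\mathfrak{A}_L$ to that of $\mathfrak{A}_{\widehat{G}_L}$, giving the desired isomorphism; it intertwines $\theta_2 \otimes \mathrm{id}$ and $\widehat{\theta}_2$, both being $dg \mapsto \psi(g) - 1$ on generators.

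Finally, transporting the exact sequence of the second paragraph across this isomorphism identifies it termwise with the completed Crowell sequence of $\widehat{G}_L$; comparing kernels of the two maps into $\widehat{\varLambda}_r$ gives $\widehat{B}_L \simeq \ker \widehat{\theta}_2 \simeq \widehat{G}_L^{\prime}/\widehat{G}_L^{\prime\prime}$. The main obstacle is the identification in the third paragraph: one must verify that pro-$p$ completion sends the finite presentation of $G_L$ to a presentation of $\widehat{G}_L$ as a pro-$p$ group, and that pro-$p$ Fox calculus is compatible with the classical one under $\varLambda_r \to \widehat{\varLambda}_r$, so that the completed Jacobian is literally the image of the classical Jacobian. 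Everything else is flat base change and a diagram comparison.
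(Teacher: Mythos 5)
Your proof is correct and follows essentially the same route as the paper: apply flat base change $-\otimes_{\varLambda_r}\widehat{\varLambda}_r$ to the Crowell exact sequence and compare it termwise with the completed Crowell sequence of $\widehat{G}_L$, the crux being the identification $\mathfrak{A}_{\widehat{G}_L}\simeq \widehat{A}_L$ compatibly with the maps to $\widehat{\varLambda}_r$. The only difference is that the paper obtains that identification by citing [HMM] (3.1.2), (3.1.6) and Morishita's Theorem 9.14, whereas you sketch it directly via pro-$p$ presentations and Fox calculus, which is exactly the content of those references.
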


\begin{proof}
There is a natural homomorphism $\varphi : \widehat{B}_L \simeq (G_L^{\prime}/G_L^{\prime\prime})\otimes_{\varLambda_r}\widehat{\varLambda}_r \rightarrow \widehat{G}_L^{\prime}/\widehat{G}_L^{\prime\prime}$. For the natural homomorphism $\widehat{\psi} : \widehat{G}_L \rightarrow \widehat{G}_L/\widehat{G}_L^{\prime}$, the completed differential module $\mathfrak{A}_{\widehat{G}_L}$ of $\widehat{G}_L$ is defined. By \cite{HMM} (3.1.2) (3.1.6) and \cite{Mor} Theorem 9.14, we have $\mathfrak{A}_{\widehat{G}_L} \simeq \widehat{A}_L$. Then there is a commutative diagram 
\[
\begin{array}{rcccccccl}
0 \rightarrow & \widehat{B}_L & \stackrel{\theta_1 \otimes id}{\rightarrow} & \widehat{A}_L & \stackrel{\theta_2 \otimes id}{\rightarrow} & \varLambda_r \otimes_{\varLambda_r} \widehat{\varLambda}_r & \rightarrow & \mathbb Z \otimes_{\varLambda_r} \widehat{\varLambda}_r & \rightarrow 0 \\
& \downarrow^{\varphi} && \rotatebox[origin=c]{90}{$\simeq$} && \rotatebox[origin=c]{90}{$\simeq$} && \rotatebox[origin=c]{90}{$\simeq$} & \\
0 \rightarrow & \widehat{G}_L^{\prime}/\widehat{G}_L^{\prime\prime} & \rightarrow & \mathfrak{A}_{\widehat{G}_L} & \rightarrow & \widehat{\varLambda}_r & \rightarrow & \mathbb Z_p & \rightarrow 0 
\end{array}
\]
with exact rows, where the bottom row is the completed Crowell sequence for $\widehat{G}_L$. Therefore $\varphi$ is an isomorphism. 
\end{proof}

The following proposition is a background of Theorem \ref{lambda1mu0}. 

\begin{proposition}
If $r=2$, then $\widehat{B}_L \simeq \widehat{\varLambda}_2/(\varDelta_L(1+T_1,1+T_2))$. Moreover, if $v_p(\ell_{12})=0$, then $\widehat{B}_L=0$. 
\end{proposition}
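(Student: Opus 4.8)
The plan is to establish the isomorphism $\widehat{B}_L \simeq \widehat{\varLambda}_2/(\varDelta_L(1+T_1,1+T_2))$ by transporting the defining data of $\widehat{B}_L$ through the completed Crowell exact sequence and then computing the relevant module structure explicitly in the case $r=2$. By the previous proposition we have the completed Crowell sequence
\[
0 \rightarrow \widehat{B}_L \stackrel{\theta_1 \otimes id}{\rightarrow} \widehat{A}_L \stackrel{\theta_2 \otimes id}{\rightarrow} \widehat{\varLambda}_2 \rightarrow \mathbb Z_p \rightarrow 0,
\]
so $\widehat{B}_L$ is identified with $\mathrm{Ker}(\theta_2 \otimes id)$, which in turn equals the image of $\theta_1 \otimes id$. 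First I would note that, since $\widehat{\varLambda}_2$ is flat over $\varLambda_2$, the completed sequence is obtained from the original Crowell sequence simply by tensoring, so all the module-theoretic computations can be carried out after base change to $\widehat{\varLambda}_2$ without worrying about the exactness of $\otimes$.

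The key computational step is to present $\widehat{A}_L$ via a Fox-calculus presentation matrix and read off $\widehat{B}_L$. For a $2$-component link, $A_L$ has a presentation over $\varLambda_2$ coming from a Wirtinger presentation of $G_L$, and the Alexander matrix for the quotient that computes $B_L$ is governed by the $0$-th Alexander polynomial $\varDelta_L = \varDelta^{(0)}_{\varLambda_2}(B_L)$. The cleanest route is to use that for $r=2$ the link module $B_L$ is cyclic as a $\varLambda_2$-module up to pseudonull discrepancy, with annihilator generated (in the divisorial sense) by $\varDelta_L$; more precisely, I would invoke that $E^{(0)}_{\varLambda_2}(B_L)$ has divisorial hull $(\varDelta_L)$ and that after base change to $\widehat{\varLambda}_2$ the module becomes genuinely cyclic. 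Substituting $t_i = 1 + T_i$ then yields $\widehat{B}_L \simeq \widehat{\varLambda}_2/(\varDelta_L(1+T_1,1+T_2))$, where the generator on the right is exactly the image of $\varDelta_L$ under $t_i \mapsto 1+T_i$.

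For the second assertion, suppose $v_p(\ell_{12})=0$. By the Torres conditions, as recorded in the proof of Theorem \ref{lambda1mu0}, we have
\[
\varDelta_L(t_1,t_2) \equiv \pm\,\ell_{12} \pmod{(t_1-1,\,t_2-1)},
\]
so $\varDelta_L(1+T_1,1+T_2)$ has constant term $\pm \ell_{12}$, which is a $p$-adic unit precisely when $v_p(\ell_{12})=0$. A power series in $\widehat{\varLambda}_2 = \mathbb Z_p[[T_1,T_2]]$ with unit constant term is a unit (its reduction mod the maximal ideal is nonzero in the residue field $\mathbb{F}_p$, and $\widehat{\varLambda}_2$ is local complete), so $(\varDelta_L(1+T_1,1+T_2)) = \widehat{\varLambda}_2$ and therefore $\widehat{B}_L = 0$.

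The main obstacle I anticipate is justifying that $\widehat{B}_L$ is \emph{exactly} cyclic with the stated single relator, rather than merely having $\varDelta_L$ as its zeroth Alexander polynomial. In general $E^{(0)}_{\varLambda_2}(B_L)$ need not be principal, and the divisorial hull only captures $\varDelta_L$ up to a pseudonull ambiguity; the delicate point is that completing and localizing kill this discrepancy. I expect this to follow from the fact that $\widehat{\varLambda}_2$ is a regular local ring of dimension $3$ in which the relevant pseudonull contributions become trivial after the flat base change, combined with the vanishing $(A_L/M_L)\otimes \varLambda_{2,J}=0$ from Levine's theorem used in the previous section; making this cleanly rigorous, rather than hand-waving about ``up to pseudonull,'' is where the real work lies.
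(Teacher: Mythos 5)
Your overall strategy coincides with the paper's (exhibit $\widehat{B}_L$ as a cyclic module whose zeroth elementary ideal is generated by $\varDelta_L(1+T_1,1+T_2)$, then use the Torres condition $\varDelta_L(1,1)=\pm\ell_{12}$ to see that this generator is a unit of $\widehat{\varLambda}_2$ when $v_p(\ell_{12})=0$), and your treatment of the second assertion is correct and essentially identical to the paper's. But the first assertion has a genuine gap exactly where you flag it, and the repair you sketch would not work. The paper's argument rests on two precise facts, both cited from \cite{Hil}: for a $2$-component link the zeroth elementary ideal $E^{(0)}_{\varLambda_2}(B_L)$ is \emph{exactly} the principal ideal $(\varDelta_L)$ (Theorem 4.6\,(4)) --- not merely an ideal whose divisorial hull is $(\varDelta_L)$ --- and $B_L\otimes_{\varLambda_2}\varLambda_{2,J}$ is a genuinely \emph{cyclic} $\varLambda_{2,J}$-module (Theorem 4.7). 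Since elementary ideals commute with base change and the zeroth elementary ideal of a cyclic module $R/I$ is $I$ itself, these two facts give $\widehat{B}_L\simeq\widehat{\varLambda}_2/(\varDelta_L(1+T_1,1+T_2))$ with no pseudonull ambiguity left to dispose of. You instead assert only the divisorial-hull statement and hope that ``completing and localizing kill'' the discrepancy.

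That hope is false as stated: $\widehat{\varLambda}_2$ is flat but not faithfully flat over $\varLambda_2$, and pseudonull modules generally survive the base change. For instance $\varLambda_2/(t_1-1,t_2-1)\simeq\mathbb Z$ is pseudonull, yet $\mathbb Z\otimes_{\varLambda_2}\widehat{\varLambda}_2\simeq\widehat{\varLambda}_2/(T_1,T_2)\simeq\mathbb Z_p\neq 0$; indeed the whole point of \S 4--5 of the paper is that pseudonullity over $\varLambda_r$ is a nontrivial condition not detected by passing to $\widehat{\varLambda}_r$. Moreover, knowing only the divisorial hull of $E^{(0)}(B_L)$ determines $B_L$ at height-one primes up to pseudo-isomorphism, which is far from pinning down an isomorphism class of the form $\widehat{\varLambda}_2/(\varDelta_L)$. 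So the missing ingredient is not a general commutative-algebra principle about regular local rings but the two structural theorems specific to $2$-component link modules (or, equivalently, a Fox-calculus computation showing that the presentation of $B_L$ becomes a $1\times 1$ presentation after localization at $J$); without them the first assertion is not proved.
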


\begin{proof}
Since $E_{\varLambda_2}^{(0)}(B_L)=\varLambda_2 \varDelta_L$ (cf.\ \cite{Hil} Theorem 4.6 (4)) and the presentation matrix of $B_L$ is the same to that of $\widehat{B}_L$, we have $E_{\widehat{\varLambda}_2}^{(0)}(\widehat{B}_L)=\widehat{\varLambda}_2 \varDelta_L(1+T_1,1+T_2)$. By Theorem 4.7 of \cite{Hil}, $\widehat{B}_L \simeq (B_L \otimes_{\varLambda_2} \varLambda_{2,J}) \otimes_{\varLambda_{2,J}} \widehat{\varLambda}_2$ is a cyclic $\widehat{\varLambda}_2$-module. Therefore $\widehat{B}_L \simeq \widehat{\varLambda}_2/(\varDelta_L(1+T_1,1+T_2))$ (cf.\ \cite{Was} p.299 Example (2)). Since $\varDelta_L(1,1)=\pm \ell_{12}$ by the Torres conditions \cite{Tor53}, we have $0 \neq \varDelta_L(1+T_1,1+T_2) \in \widehat{\varLambda}_2^{\times} \cap \mathrm{Ann}_{\widehat{\varLambda}_2} \widehat{B}_L$ if $v_p(\ell_{12})=0$. Then $\widehat{B}_L=0$. 
\end{proof}

\begin{remark}{\em 
$Y_{\widetilde{k}}=0$ for the analogous situation (cf.\ \S3). 
This proposition is also a consequence of Theorem 1.1.6 of \cite{HMM}. 
}\end{remark}

A pro-$p$ version of Greenberg type problem has trivial answer as follows. 

\begin{proposition}
$Y_L \otimes_{\varLambda_r} \widehat{\varLambda}_r =0$. 
\end{proposition}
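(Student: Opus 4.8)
The plan is to reduce the whole statement to the localization $\varLambda_{r,J}$, where the vanishing is already known. The central observation is that $Y_L \otimes_{\varLambda_r} \varLambda_{r,J} = 0$ was obtained in the course of proving part (i) of the theorem in \S4: Levine's theorem \cite{Lev83} gives $(A_L/M_L)\otimes_{\varLambda_r}\varLambda_{r,J}=0$, and since the Crowell sequence realizes $Y_L$ as a $\varLambda_r$-submodule of $A_L/M_L$ (indeed $\theta_1^{-1}(M_L \cap \mathrm{Ker}\,\theta_2)=\theta_1^{-1}(M_L)$, so $Y_L \simeq \mathrm{Ker}\,\theta_2/(\mathrm{Ker}\,\theta_2 \cap M_L) \hookrightarrow A_L/M_L$) and $\varLambda_{r,J}$ is flat over $\varLambda_r$, the vanishing of $(A_L/M_L)\otimes_{\varLambda_r}\varLambda_{r,J}$ forces $Y_L \otimes_{\varLambda_r} \varLambda_{r,J}=0$. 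I would stress that this localized vanishing uses no hypothesis on $\varDelta_L$, so it is available unconditionally.

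With that in hand, the argument is a formal base-change manipulation. First I would recall from the start of \S5 that $\widehat{\varLambda}_r \simeq \mathbb Z_p[[\widehat{G}_L/\widehat{G}_L^{\prime}]]$ is naturally a $\varLambda_{r,J}$-algebra, the inclusion $\varLambda_r \hookrightarrow \widehat{\varLambda}_r$ factoring through $\varLambda_{r,J}$. Then transitivity (associativity) of the tensor product yields
\[
Y_L \otimes_{\varLambda_r} \widehat{\varLambda}_r \;\simeq\; \big( Y_L \otimes_{\varLambda_r} \varLambda_{r,J} \big) \otimes_{\varLambda_{r,J}} \widehat{\varLambda}_r .
\]
Substituting $Y_L \otimes_{\varLambda_r} \varLambda_{r,J}=0$ from the first step gives $Y_L \otimes_{\varLambda_r} \widehat{\varLambda}_r = 0$, as claimed.

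There is essentially no obstacle here. Once the $\varLambda_{r,J}$-level vanishing is recorded, no further flatness input is needed, because one is tensoring the zero module; the flatness of $\widehat{\varLambda}_r$ over $\varLambda_{r,J}$ noted in \S5 merely offers an alternative route. The only point deserving a moment's care is to confirm that the part (i) proof used its hypothesis on $\varDelta_L$ solely to pass from $Y_L\otimes_{\varLambda_r}\varLambda_{r,J}=0$ to pseudonullity, and not to obtain that vanishing in the first place. Accordingly, I would present this proposition as recording that the naive pro-$p$ analogue of the Greenberg-type problem always has an affirmative answer after $p$-adic completion, in contrast to the subtler integral question of whether $Y_L$ itself is pseudonull over $\varLambda_r$.
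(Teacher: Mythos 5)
Your proof is correct and is essentially the paper's own argument: the paper likewise obtains $Y_L \otimes_{\varLambda_r}\varLambda_{r,J}=0$ from Levine's theorem (as in the proof of part (i) of the theorem in \S 4) and then concludes via $Y_L \otimes_{\varLambda_r} \widehat{\varLambda}_r \simeq (Y_L \otimes_{\varLambda_r} \varLambda_{r,J}) \otimes_{\varLambda_{r,J}} \widehat{\varLambda}_r \simeq 0$. Your added details --- the embedding $Y_L \hookrightarrow A_L/M_L$ via the Crowell sequence, flatness of the localization, and the observation that no hypothesis on $\varDelta_L$ is needed for the localized vanishing --- merely make explicit what the paper leaves implicit.
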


\begin{proof}
By Levine's theorem \cite{Lev83}, $Y_L \otimes_{\varLambda_r} \widehat{\varLambda}_r \simeq (Y_L \otimes_{\varLambda_r} \varLambda_{r,J}) \otimes_{\varLambda_{r,J}} \widehat{\varLambda}_r \simeq 0 \otimes_{\varLambda_{r,J}} \widehat{\varLambda}_r \simeq 0$. 
\end{proof}

This proposition implies that the Iwasawa invariants $\lambda_{L,\mathbf{z}}$, $\mu_{L,\mathbf{z}}$ and $\nu_{L,\mathbf{z}}$ are related to only the structure of the module $M_L \otimes_{\varLambda_r} \widehat{\varLambda}_r$ genetared by meridianal elements.

\vspace*{20pt}
\noindent\textbf{Acknowledgement.} 
The first author was partially supported by a grant (No.\ 10801021/a010402) of NSFC, and he was financially supported by Professor Akio Kawauchi during his stay in Japan. The second author was partially supported by Grant-in-Aid for Young Scientists (B) (No.\ 22740010). 

\begin{reference}

\bibitem{FOO06} S. Fujii, Y. Ohgi and M. Ozaki, 
\textit{Construction of $\Bbb Z_p$-extensions with prescribed Iwasawa $\lambda$-invariants}, 
J. Number Theory \textbf{118} (2006), no.\ 2, 200--207. 

\bibitem{Gol74} R. Gold, 
\textit{The nontriviality of certain $Z_{l}$-extensions}, 
J. Number Theory \textbf{6} (1974), no.\ 5, 369--373. 

\bibitem{Gre76} R. Greenberg, 
\textit{On the Iwasawa invariants of totally real number fields}, 
Amer.\ J.\ Math.\ \textbf{98} (1976), no.\ 1, 263--284. 

\bibitem{Gre78} R. Greenberg, 
\textit{On the structure of certain Galois groups}, 
Invent.\ Math.\ \textbf{47} (1978), no.\ 1, 85--99. 

\bibitem{Gre01} R. Greenberg, 
\textit{Iwasawa theory---past and present}, 
Class field theory---its centenary and prospect (Tokyo, 1998), 335--385, 
Adv.\ Stud.\ Pure Math.\ \textbf{30}, Math.\ Soc.\ Japan, Tokyo, 2001. 

\bibitem{Hil} J. Hillman, 
\textit{Algebraic invariants of links}, 
Series on Knots and Everything \textbf{32}, World Scientific Publishing Co., Inc., River Edge, NJ, 2002.

\bibitem{HMM} J. Hillman, D. Matei and M. Morishita, 
\textit{Pro-$p$ link groups and $p$-homology groups}, 
Primes and knots, 121--136, Contemp. Math.\ \textbf{416}, Amer. Math. Soc., Providence, RI, 2006. 

\bibitem{Hos58} F. Hosokawa, 
\textit{On $\nabla$-polynomials of links}, 
Osaka Math.\ J.\ \textbf{10} (1958), 273--282. 

\bibitem{Iwa59} K. Iwasawa, 
\textit{On $\Gamma$-extensions of algebraic number fields}, 
Bull.\ Amer.\ Math.\ Soc.\ \textbf{65} (1959), 183--226. 

\bibitem{KM08} T. Kadokami and Y. Mizusawa, 
\textit{Iwasawa type formula for covers of a link in a rational homology sphere}, 
J.\ Knot Theory Ramifications \textbf{17} (2008), no.\ 10, 1199--1221. 

\bibitem{Kan84} T. Kanenobu, 
\textit{Alexander polynomials of two-bridge links}, 
J.\ Austral.\ Math.\ Soc.\ Ser.\ A \textbf{36} (1984), 59--68. 

\bibitem{Kaw96} A. Kawauchi, 
\textit{A survey of knot theory}, 
Birkh\"auser Verlag, Basel, 1996. 



\bibitem{Lev83} J. P. Levine, 
\textit{Localization of link modules}, 
Low-dimensional topology (San Francisco, Calif., 1981), 213--229, 
Contemp.\ Math.\ \textbf{20}, Amer.\ Math.\ Soc., Providence, RI, 1983. 

\bibitem{Lev88} J. P. Levine, 
\textit{Symmetric presentation of link modules}, 
Topology Appl.\ \textbf{30} (1988), no.\ 2, 183--198. 

\bibitem{Mat} H. Matsumura, 
\textit{Commutative ring theory}, 
Cambridge Studies in Advanced Mathematics \textbf{8}, Cambridge University Press, Cambridge, 1989.

\bibitem{MM82} J. P. Mayberry and K. Murasugi, 
\textit{Torsion-groups of abelian coverings of links}, 
Trans.\ Amer.\ Math.\ Soc.\ \textbf{271} (1982), no.\ 1, 143--173. 

\bibitem{Maz} B. Mazur, 
\textit{Remarks on the Alexander polynomial}, unpublished paper, 1963/1964. 
\verb+http://www.math.harvard.edu/~mazur/papers/alexander_polynomial.pdf+

\bibitem{Mor02} M. Morishita, 
\textit{On certain analogies between knots and primes}, 
J.\ Reine Angew.\ Math.\ \textbf{550} (2002), 141--167.

\bibitem{Mor10} M. Morishita, 
\textit{Analogies between knots and primes, 3-manifolds and number rings}, 
Sugaku Expositions \textbf{23} (2010), no.\ 1, 1--30. 

\bibitem{Mor} M. Morishita, 
\textit{Knots and Primes - An Introduction to Arithmetic Topology}, Springer, 2012. 


\bibitem{NQD83} T. Nguyen Quang Do, 
\textit{Formations de classes et modules d'Iwasawa}, 
Number theory, Noordwijkerhout 1983, 167--185, 
Lecture Notes in Math.\ \textbf{1068}, Springer, Berlin, 1984. 

\bibitem{Oza01} M. Ozaki, 
\textit{Iwasawa invariants of $\Bbb Z_p$-extensions over an imaginary quadratic field}, 
Class field theory---its centenary and prospect (Tokyo, 1998), 387--399,
Adv.\ Stud.\ Pure Math.\ \textbf{30}, Math.\ Soc.\ Japan, Tokyo, 2001. 

\bibitem{Oza04} M. Ozaki, 
\textit{Construction of $\bold Z_p$-extensions with prescribed Iwasawa modules},
J.\ Math.\ Soc.\ Japan \textbf{56} (2004), no.\ 3, 787--801. 

\bibitem{Por04} J. Porti, 
\textit{Mayberry-Murasugi's formula for links in homology 3-spheres},
Proc.\ Amer.\ Math.\ Soc.\ \textbf{132} (2004), no.\ 11, 3423--3431. 

\bibitem{San} J. W. Sands, 
\textit{On the nontriviality of the basic Iwasawa $\lambda$-invariant for an infinitude of imaginary quadratic fields}, 
Acta Arith.\ \textbf{65} (1993), no.\ 3, 243--248. 

\bibitem{Tor53} G. Torres, 
\textit{On the Alexander polynomial}, 
Ann.\ of Math.\ (2) \textbf{57} (1953), 57--89. 

\bibitem{Was} L. C. Washington, 
\textit{Introduction to cyclotomic fields (Second edition)}, 
Graduate Texts in Mathematics \textbf{83}, Springer-Verlag, New York, 1997.

\end{reference}

\vfill
{\small 
\textsc{Teruhisa Kadokami}: 
Department of Mathematics, East China Normal University, Dongchuan-lu 500, Shanghai 200241, China. \texttt{mshj@math.ecnu.edu.cn}
\\[-3mm]%

\textsc{Yasushi Mizusawa}: 
Department of Mathematics, Nagoya Institute of Technology, Gokiso, Showa, Nagoya 466-8555, Japan. \texttt{mizusawa.yasushi@nitech.ac.jp}
}
\end{document}